\documentclass[11pt,twoside]{article}
\usepackage{times}
\usepackage{amsmath,amssymb,amsthm}
\usepackage{enumerate}
\usepackage{cite}


\pagestyle{myheadings}
\markboth{On semilinear Tricomi equations with critical exponents
or in two space dimensions}{D.-Y.~He, I.~Witt,
  and H.-C. Yin}
\textwidth=155mm
\textheight=220mm
\oddsidemargin=0mm
\evensidemargin=0mm
\headheight=10mm
\headsep=3mm
\footskip=4mm
\topmargin=0mm

\footskip=0pt
\footnotesep=2pt

\allowdisplaybreaks

\newcommand{\no}{\nonumber}

\newcommand{\R}{\mathbb R}
\newcommand{\N}{\mathbb N}
\newcommand{\Z}{\mathbb Z}
\newcommand{\p}{\partial}
\newcommand{\ve}{\varepsilon}
\newcommand{\f}{\frac}

\newcommand{\al}{\alpha}
\renewcommand{\t}{\tilde}

\renewcommand{\th}{\theta}

\newcommand{\ds}{\displaystyle}
\newcommand{\md}{\mathrm{d}}

\allowdisplaybreaks


\newcommand{\crit}{\textup{crit}}
\newcommand{\conf}{\textup{conf}}


\theoremstyle{plain}
\newtheorem{theorem}{Theorem}[section]

\newtheorem{lemma}[theorem]{Lemma}

\newtheorem*{claim}{Claim}

\theoremstyle{definition}

\theoremstyle{remark}
\newtheorem{remark}{Remark}[section]

\numberwithin{equation}{section}



\title{On semilinear Tricomi equations with critical exponents
or in two space dimensions}
  \author{Daoyin He$^{1*}$,\qquad Ingo Witt$^{2,*}$, \qquad Huicheng
  Yin$^{3,}$\footnote{He Daoyin (\texttt{daoyinhe@mathematik.uni-goettingen.de})
    and Yin Huicheng
    (\texttt{huicheng@} \texttt{nju.edu.cn}) are supported by the NSFC
    (No.~11571177) and by the Priority Academic Program Development of
    Jiangsu Higher Education Institutions. Ingo~Witt (\texttt{iwitt@mathematik.uni-goettingen.de})
     was partly supported
    by the DFG through the Sino-German project ``Analysis of PDEs and
    Applications." }\vspace{0.5cm}\\ \small 1.
  Department of Mathematics and IMS, Nanjing University, Nanjing
  210093, China.\\ \small 2.  Mathematical Institute, University of
  G\"{o}ttingen, Bunsenstr.~3-5, D-37073 G\"{o}ttingen,
  Germany.\\ \small 3.  School of Mathematical Sciences, Jiangsu Provincial Key Laboratory for Numerical Simulation\\
\small of Large Scale Complex Systems, Nanjing Normal University, Nanjing 210023, China.\\}
\vspace{0.5cm}
\begin{document}
\date{}

\maketitle
\thispagestyle{empty}

\begin{abstract}
This paper is a complement of our recent works on  the semilinear Tricomi equations
in \cite{HWYin1} and \cite{HWYin2}.
For the semilinear Tricomi equation $\partial_t^2 u-t\Delta
u=|u|^p$ with initial data $(u(0,\cdot), \p_t u(0,\cdot))$ $=(u_0, u_1)$,
where $t\ge 0$, $x\in\R^n$ ($n\ge 3$), $p>1$, and $u_i\in
C_0^{\infty}(\R^n)$ ($i=0,1$),  we have shown in  \cite{HWYin1} and \cite{HWYin2}
that there exists a critical
exponent $p_{\crit}(n)>1$
such that the solution $u$, in general,
blows up in finite time when $1<p<p_{\crit}(n)$, and there is a global small solution  for \(p>p_{\crit}(n)\).
In the present paper,
firstly, we prove that the solution of $\partial_t^2 u-t\Delta
u=|u|^p$ will generally blow up for the critical exponent \(p=p_{\crit}(n)\) and \(n\geq2\),
secondly, we establish the global existence of small data solution
to $\partial_t^2 u-t\Delta
u=|u|^p$  for $p>p_{crit}(n)$ and $n=2$. Thus,
we have given a systematic study on the blowup or global existence of small data solution $u$ to
the equation $\partial_t^2 u-t\Delta u=|u|^p$  for $n\ge 2$.

\end{abstract}

\vskip 0.2 true cm

\textbf{Keywords:}  Tricomi equation, critical exponent,
blowup, global existence, Strichartz estimate.

\vskip 0.2 true cm

{\bf Mathematical Subject Classification 2000:} 35L70, 35L65,
35L67

\vskip 0.4 true cm

\section{Introduction}

In this paper, we continue to be concerned with the global existence or blowup
of solutions $u$ to the semilinear Tricomi equation
\begin{equation}\label{equ:original}
\left\{ \enspace
\begin{aligned}
&\partial_t^2 u-t\Delta u =|u|^p, \\
&u(0,\cdot)=f(x), \quad \partial_{t} u(0,\cdot)=g(x),
\end{aligned}
\right.
\end{equation}
where $t\geq 0$, $x=(x_1,\dots, x_n)\in\R^n$ ($n\ge 2$),
$p>1$, and $u_i\in C_0^{\infty}(B(0,M))$ $(i=0,1)$ with $B(0, M)=\{x: |x|=\sqrt{x_1^2+...+x_n^2}<M\}$ and $M>1$.
For the local well-posedness and optimal regularities of solution \(u\) to problem \eqref{equ:original},
the readers may consult \cite{Rua1,Rua2,Rua3,Rua4,Yag2} and the references therein.
In \cite{HWYin1}-\cite{HWYin2}, we have determined a critical exponent $p_{crit}(n)$ and a conformal
exponent $p_{conf}(n)$ $(>p_{crit}(n)\big)$ for \eqref{equ:original} as follows (corresponding to the case of $m=1$
in the generalized equation Tricomi equation $\partial_t^2 u-t^m\Delta u =|u|^p$):
$p_{crit}(n)$ is the positive root of the algebraic equation
\begin{equation}\label{equ:1.1}
(3n-2)p^2-3np-6=0,
\end{equation}
and $p_{conf}(n)=\frac{3n+6}{3n-2}$.
It is shown in \cite{HWYin1} that for all \(n\geq2\),
the solution $u$ of \eqref{equ:original} generally blows up in
finite time when $1 < p < p_{crit}(n)$, and
meanwhile $u$ exists globally when $p\ge p_{conf}(n)$ for small initial data and $n\ge 2$. In \cite{HWYin2},
we prove that the small data solution $u$ of \eqref{equ:original} exists globally when \(n\geq3\) and $p_{crit}(n)<p< p_{conf}(n)$.
Therefore, collecting the results in \cite{HWYin1}-\cite{HWYin2},
we have given a detailed study on the blowup or global existence of small data solution $u$ to
problem \eqref{equ:original} for $n\ge 3$ except $p=p_{crit}(n)$, and  for $n=2$
with $p\ge p_{conf}(n)$ except $p_{crit}(n)<p< p_{conf}(n)$.
In this paper, firstly, we establish the finite time blowup result for problem \eqref{equ:original}
when $n\ge 2$ and $p=p_{crit}(n)$, secondly, we prove the global existence of small data solution $u$ to
problem \eqref{equ:original} when $n=2$ and $p_{crit}(n)<p< p_{conf}(n)$.

\begin{theorem}\label{thm:1.1}
Let \(n\geq2\) and \(p=p_{crit}(n)\). Suppose that the initial data \(f,g\in C^{\infty}_0(\R^n)\) are non-negative and
positive somewhere, then  problem \eqref{equ:original} admits no global solution \(u\) with
\[u\in C^1\big([0,\infty), H^1(\R^n)\big)\cap C\big([0,\infty), L^2(\R^n)\big).\]
\end{theorem}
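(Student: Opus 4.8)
The plan is to argue by contradiction: assuming a solution $u\in C^1\big([0,\infty),H^1(\R^n)\big)\cap C\big([0,\infty),L^2(\R^n)\big)$ of \eqref{equ:original} exists for all time, I will produce a spatial average of $u$ that must become infinite at a finite time. Since the Tricomi operator $\partial_t^2-t\Delta$ carries no conserved energy controlling $\|u(t,\cdot)\|_{L^2}$, the contradiction cannot come from an a priori growth bound and must be genuine finite-time blow-up of a test functional. The geometric input is finite propagation speed: the characteristic speed of $\partial_t^2-t\Delta$ is $\sqrt t$, so the domain of dependence spreads at the rate $\phi(t):=\int_0^t\sqrt{s}\,ds=\frac23 t^{3/2}$, and the compactly supported data propagate to $\operatorname{supp}u(t,\cdot)\subseteq\{x:|x|\le M+\frac23 t^{3/2}\}$; in particular $\big|\operatorname{supp}u(t,\cdot)\big|\lesssim(1+t)^{3n/2}$.

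Now set $F_0(t):=\int_{\R^n}u(t,x)\,dx$. Integrating \eqref{equ:original} over $\R^n$ and using $\int_{\R^n}\Delta u\,dx=0$ gives $F_0''(t)=\int_{\R^n}|u(t,x)|^p\,dx\ge0$. Since $f,g\ge0$ with $(f,g)\not\equiv(0,0)$, the numbers $F_0(0)\ge0$ and $F_0'(0)\ge0$ are not both zero, so $F_0$ is convex, nondecreasing, positive for $t>0$, and $F_0(t)\gtrsim t$ for $t\gg1$. Hölder's inequality on $\operatorname{supp}u(t,\cdot)$ then yields
\[
F_0''(t)\ \ge\ \frac{F_0(t)^p}{\big|\operatorname{supp}u(t,\cdot)\big|^{p-1}}\ \gtrsim\ (1+t)^{-a}F_0(t)^p,\qquad a:=\frac{3n(p-1)}{2}.
\]
By \eqref{equ:1.1}, at $p=p_{\crit}(n)$ this exponent satisfies $a=p+\frac3p$, equivalently $\alpha_*:=\frac{a-2}{p-1}$ is exactly the fixed point of the map $\alpha\mapsto p\,\alpha+2-a$ that governs the iteration of the inequality above; this equality is the analytic expression of criticality, and one checks $\alpha_*>1$ for all $n\ge2$ since $p_{\crit}(n)<3$.

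To raise the lower bound on $F_0$ towards $\alpha_*$, I use a weighted functional built from the adjoint linear equation, in the spirit of the Kato--Yordanov--Zhang test-function method and of \cite{HWYin1}. Take $\psi(t,x):=\mathrm{Ai}(t)\,\varphi(x)$, where $\varphi(x)=\int_{S^{n-1}}e^{x\cdot\omega}\,d\omega>0$ is the standard eigenfunction, $\Delta\varphi=\varphi$, $\varphi(x)\sim c_n|x|^{-(n-1)/2}e^{|x|}$, and $\mathrm{Ai}$ is the Airy function, which solves $\mathrm{Ai}''(t)=t\,\mathrm{Ai}(t)$, is positive on $[0,\infty)$, and decays like $t^{-1/4}e^{-\frac23 t^{3/2}}$; then $\partial_t^2\psi-t\Delta\psi=0$. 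The decisive point is that on the light cone $|x|\le M+\frac23 t^{3/2}$ the exponential growth of $\varphi$ is exactly compensated by the decay of $\mathrm{Ai}$, so that $\int_{\operatorname{supp}u(t,\cdot)}\psi(t,x)^{q}\,dx\lesssim(1+t)^{K_q}$ for each $q>0$ with only polynomial weights. With $H(t):=\int_{\R^n}\big(\partial_t u\,\psi-u\,\partial_t\psi\big)\,dx$, integration by parts in $x$ (legitimate because $u(t,\cdot)$ is compactly supported) gives $H'(t)=\int_{\R^n}|u|^p\psi\,dx\ge0$ and $H(0)>0$; combining this with a weighted Hölder inequality and the propagation bound upgrades $F_0(t)\gtrsim t$ to a lower bound of order essentially $(1+t)^{\alpha_*}$.

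The main obstacle is the final step. At $p=p_{\crit}(n)$ the ODE inequality for $F_0$, even after the previous improvement, lies exactly on the borderline where Kato's lemma fails: the iterated exponents stall at $\alpha_*$ instead of escaping to $+\infty$, so no polynomial bootstrap closes. As for the critical semilinear wave equation, this is overcome by a logarithmically refined iteration (the slicing method of Agemi--Kurokawa--Takamura and Takamura--Wakasa, or an equivalent logarithmic sharpening of Kato's lemma): one constructs inductively lower bounds
\[
F_0(t)\ \ge\ C_j\,(1+t)^{\alpha_*}\big(\log(1+t)\big)^{\beta_j}\qquad\text{for }t\ge T_j,
\]
and feeding each one back into $F_0''\gtrsim(1+t)^{-a}F_0^p$ and integrating twice reproduces the same form with $\beta_j$ replaced by $\beta_{j+1}=p\,\beta_j$ (using $p\alpha_*-a=\alpha_*-2$ and $\alpha_*\notin\{0,1\}$), at the price of explicitly trackable constants $C_j$ and starting times $T_j$. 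The real work is the bookkeeping: controlling the infinite product of the constants $C_j$ (choosing the initial $T_0$ large so the recursion $C_{j+1}\sim c'C_j^p$ escapes upward) and verifying that the relation $a=p+\frac3p$ from \eqref{equ:1.1} makes the resulting logarithmic series diverge, so that $F_0$ — hence $\int_{\R^n}|u(t,\cdot)|^p\,dx$ — becomes infinite at a finite time, contradicting $u\in C^1\big([0,\infty),H^1(\R^n)\big)$. The low-regularity manipulations (differentiating under the integral, the integrations by parts defining $H$, and $\int_{\R^n}|u(t,\cdot)|^p\,dx<\infty$) are routine, since $u(t,\cdot)$ is compactly supported and lies in $H^1(\R^n)\hookrightarrow L^{p_{\crit}(n)}(\R^n)$ and all identities are read in the weak sense granted by finite propagation speed.
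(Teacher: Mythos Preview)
Your strategy is a genuine alternative to the paper's, and it is worth contrasting the two. The paper never runs a logarithmic iteration on the ODE for $G(t)=\int_{\R^n}u\,dx$. After recording the same baseline inequality $G''(t)\gtrsim (M+t)^{-\frac{3n(p-1)}{2}}G(t)^p$ that you obtain, it passes to the one-dimensional Radon transform $\mathbf{R}(u)(t,\rho)$, invokes the explicit fundamental-solution formula for $\partial_t^2-t\partial_\rho^2$ (a hypergeometric kernel), and derives a pointwise lower bound for $\mathbf{R}(u)$; feeding this back through a Hardy-type operator produces the sharp bound $\int|u|^p\,dx\gtrsim t^{\sigma}\ln\phi(t)$ \emph{directly}, with the logarithm already present. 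Since $\ln\phi(t)\to\infty$, one obtains $G(t)\ge K_0(t+M)^{a}$ with $K_0$ as large as one likes, and Yordanov--Zhang's Lemma~2.1 closes the argument in one shot, with no iteration. What the paper's route buys is an explicit logarithm tied to the representation formula; what your route would buy is independence from the fundamental solution and the Radon-transform machinery, relying only on the correctly chosen Airy test function $\psi=\mathrm{Ai}(t)\varphi(x)$ and Kato-type ODE analysis, so it would in principle extend to degenerate equations for which no explicit kernel is available.

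The one place your outline is genuinely incomplete is the \emph{seed} of the logarithmic iteration. The Airy test-function step, as you describe it, yields only $\int|u|^p\,dx\gtrsim t^{\sigma}$ at the exact critical power, and plugging this into $F_0''\gtrsim(1+t)^{-a}F_0^{\,p}$ with $p\alpha_*-a=\alpha_*-2$ merely reproduces $F_0\gtrsim t^{\alpha_*}$; that is $\beta_0=0$, and the recursion $\beta_{j+1}=p\beta_j$ stays at zero forever. In the Takamura--Wakasa scheme for the wave equation the first logarithm is not produced by iterating the ODE but by a refined slicing of the Duhamel integral (equivalently, a pointwise lower bound on the forward fundamental solution restricted to a suitable space-time slab), which is precisely the role played in the paper by the Radon-transform/hypergeometric estimate. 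To make your approach go through you must either adapt that slicing to the Tricomi Duhamel kernel, or sharpen the test-function step so that the first insertion already yields $\beta_1>0$; once such a seed is exhibited, the rest of your bookkeeping is routine.
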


\begin{theorem}\label{thm:1.2}
Let $n=2$.
For $p_{crit}(n)<p\le p_{conf}(n)$, suppose that the initial data \((f,g)\) satisfy
\begin{equation}\label{equ:1.2}
\sum_{|\alpha|\leq1}(\|Z^\alpha f\|_{\dot{H}^s(\R^2)}+\|Z^\alpha g\|_{\dot{H}^{s-{\f{2}{3}}}(\R^2)})<\varepsilon,
\end{equation}
where $\varepsilon>0$ is a sufficiently small constant, $s=1-\f{4}{3(p-1)}$, and $Z=\{\partial_1, \partial_2, x_1\partial_2-x_2\partial_1\}$.
Then problem \eqref{equ:original} admits a global solution \(u\) such that
\[u\in L^q_tL^p_{r}L_\theta^2(\R_+^{1+2}),\quad \text{i.e., $\biggl(\int_0^{\infty} \bigl(\int_0^{\infty}\bigl(\int_0^{2\pi}|u(t, r cos\th, r sin\th)|^2d\th\bigr)^{\f{p}{2}}dr\bigr)^{\f{q}{p}}dt\biggr)^{\f{1}{q}}<\infty$},\]
where $x_1=r cos\th$ and $x_2=r sin\th$ with $r\ge 0$ and $\th\in [0, 2\pi]$, \(q=\f{p(p-1)}{3-p}>2\)
for $p_{crit}(2)<p\le\f73$; \(q=\f{(3p+1)(p-11)}{11-3p}>2\)
for $\f73<p\leq\f{4+\sqrt{17}}{3}$;
\(q=\f{p^2-1}{5-p}>2\)
for $\f{4+\sqrt{17}}{3}<p\le p_{conf}(2)=3$.
\end{theorem}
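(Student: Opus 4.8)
\noindent\textbf{Proof strategy for Theorem \ref{thm:1.2}.}
The scaling $u(t,x)\mapsto \lambda^{\f{4}{3(p-1)}}u(\lambda^{2/3}t,\lambda x)$ leaves $\p_t^2u-t\Delta u=|u|^p$ invariant, and it is exactly this scaling that dictates the data regularities $\dot H^s$, $\dot H^{s-2/3}$ in \eqref{equ:1.2} and the homogeneity of the solution space $L^q_tL^p_rL^2_\theta$; note in particular that $s=1-\f{4}{3(p-1)}\le0$ precisely on the first window $p_{\crit}(2)<p\le\f73$, which is why in that range the scheme must run purely in Strichartz spaces (consistent with the theorem asserting no $C_t$-regularity of $u$). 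The plan is therefore a contraction mapping in a scaling-critical Strichartz space. Using that the rotation $\Omega=x_1\p_2-x_2\p_1$ commutes with $\p_t^2-t\Delta$, write the linear solution as the homogeneous Tricomi flow applied to $(f,g)$ and let $\mathcal L$ be the forward Duhamel operator, so that \eqref{equ:original} becomes $u=u^{\mathrm L}+\mathcal L(|u|^p)$. I would look for $u$ in the Banach space
\[
X=\Bigl\{u:\ \textstyle\sum_{|\alpha|\le1}\|Z^\alpha u\|_{L^q_tL^p_rL^2_\theta(\R^{1+2}_+)}<\infty\Bigr\},
\]
possibly enlarged by one auxiliary admissible Strichartz norm so that the Hölder exponents in the nonlinear estimate balance, and perform the iteration in a small ball of $X$, smallness coming from \eqref{equ:1.2}.

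First I would establish the linear estimates in their $Z$-commuted form: the homogeneous Strichartz estimate $\|u^{\mathrm L}\|_{L^q_tL^p_rL^2_\theta}\lesssim\|f\|_{\dot H^s}+\|g\|_{\dot H^{s-2/3}}$, and an inhomogeneous estimate $\|\mathcal L F\|_{L^q_tL^p_rL^2_\theta}\lesssim\|F\|_{L^{\tilde q'}_tL^{\tilde p'}_rL^2_\theta}$ for a suitable admissible dual pair $(\tilde q,\tilde p)$; the inhomogeneous estimate is subtler because $\p_t^2-t\Delta$ is not translation invariant in $t$, so one works with the phase difference $\f23(t^{3/2}-\tau^{3/2})$ and must control the contribution of $\tau$ near $0$. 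Both rest on the oscillatory-integral parametrix for the Tricomi operator, with phase $x\cdot\xi\pm\f23t^{3/2}|\xi|$ and amplitude of the order used in \cite{HWYin1,HWYin2} (supplemented near $t=0$ by the Airy/Bessel-type representation of the solution operator): dispersive decay, $TT^*$, and interpolation give the estimates, the exponent shift $\f23$ being traceable to $\phi(t)=\f23 t^{3/2}$. Because the ordinary $L^q_tL^p_x$ Strichartz estimates are lossy at the endpoints in two dimensions, I would instead use the angular-regularity-improved estimates (in the spirit of Fang--Wang and Sterbenz) in which $L^\infty_\theta$ is replaced by $L^2_\theta$; this is what singles out the mixed-norm space of the statement and is why one angular derivative, carried by $x_1\p_2-x_2\p_1\in Z$, must be propagated. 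Keeping $q>2$ stays off the (problematic) two-dimensional endpoint.

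The core is the scaling-critical nonlinear estimate $\||u|^p\|_{L^{\tilde q'}_tL^{\tilde p'}_rL^2_\theta}\lesssim\|u\|_X^p$ and its Lipschitz analogue (legitimate since $p>1$ makes $w\mapsto|w|^p$ of class $C^1$, so $Z(|u|^p)=p|u|^{p-1}\mathrm{sgn}(u)\,Zu$). I would split it over the three variables: in $\theta$, use the Sobolev embedding $H^\sigma(S^1)\hookrightarrow L^{2p}(S^1)$ with $\sigma=\f12-\f{1}{2p}<1$, so that $\||u|^p\|_{L^2_\theta}=\|u\|_{L^{2p}_\theta}^p\lesssim\|u\|_{H^\sigma_\theta}^p$, the right side being controlled by interpolating $\|\Omega^ju\|_{L^2_\theta}$ for $j\le1$; in $r$, pass from the available $L^p_r$ to the required $L^{pp'}_r$ by Hölder against the auxiliary norm, whose extra regularity is exactly the $|\alpha|=1$ part of \eqref{equ:1.2} (equivalently, by the product decomposition $|u|^p=|u|\cdot|u|^{p-1}$ distributing the factors between the two Strichartz spaces); in $t$, match the Hölder identity $\f1{\tilde q'}=\f pq$ up to the auxiliary-norm contribution. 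Imposing the Strichartz admissibility constraints on $(\tilde q,\tilde p)$ while solving this last identity is what forces the three displayed formulas for $q$, the three windows $\bigl(p_{\crit}(2),\f73\bigr]$, $\bigl(\f73,\f{4+\sqrt{17}}3\bigr]$, $\bigl(\f{4+\sqrt{17}}3,3\bigr]$ reflecting which admissible Strichartz pair is available there (the first being the regime $s\le0$); since the data sit at critical regularity there is no slack, so $q$ is essentially dictated. With these estimates in hand, the Banach fixed-point theorem in a small ball of $X$ produces the global solution, and in particular $u\in L^q_tL^p_rL^2_\theta(\R^{1+2}_+)$.

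The step I expect to be the main obstacle is the derivation of the sharp two-dimensional angular-mixed-norm Strichartz estimates for the Tricomi operator, together with the bookkeeping of the admissible pairs across the three exponent windows: unlike the case $n\ge3$ treated in \cite{HWYin2}, here one must extract the $L^2_\theta$ gain from the degenerate parametrix (including its behaviour near $t=0$ and the non-translation-invariant Duhamel estimate), and then verify that the resulting scaling-critical nonlinear estimate still closes, with no room to spare, for every $p\in(p_{\crit}(2),3]$ simultaneously.
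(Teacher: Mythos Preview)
Your strategy matches the paper's: contraction in the norm $\sum_{|\alpha|\le1}\|Z^\alpha\cdot\|_{L^q_tL^r_{|x|}L^2_\theta}$, with angular-improved Strichartz estimates for the Tricomi flow (proved in Section~3 via the Bessel--Fourier expansion of Smith--Sogge--Wang adapted to the phase $\phi(t)|\xi|$ and amplitude bound $(1+\phi(t)|\xi|)^{-1/6}$) as the main linear input, and your identification of this as the principal obstacle is accurate.

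Two places where the paper is more direct than your sketch are worth noting. First, in the angular variable the paper uses the endpoint embedding $H^1(S^1)\hookrightarrow L^\infty(S^1)$ rather than a fractional one, writing
\[
\bigl\||u|^{p-1}Z^\alpha u\bigr\|_{L^2_\theta}\le\|u\|_{L^\infty_\theta}^{p-1}\|Z^\alpha u\|_{L^2_\theta}\lesssim\Bigl(\sum_{|\beta|\le1}\|Z^\beta u\|_{L^2_\theta}\Bigr)^p,
\]
so no interpolation in $\theta$ is needed. Second, no auxiliary Strichartz norm enters: one simply takes $\tilde r'=r/p$ and $\tilde q'=q/p$, and the nonlinear H\"older closes in a single space. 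The three exponent windows then arise not from switching an auxiliary pair but from three choices of the \emph{radial} exponent $r\in\{p,\,p+\tfrac13,\,p+1\}$ (so the iteration actually runs in $L^q_tL^{r}_{|x|}L^2_\theta$ with $r$ depending on the window, notwithstanding the uniform notation in the statement); each choice is forced by the Littlewood--Paley constraint $\tilde q',\tilde r'\in[1,2]$ together with the dual admissibility $\tfrac1{\tilde q}+\tfrac3{2\tilde r}\le1$. Your observation that the first window is exactly the regime $s\le0$ is correct and corresponds to the choice $r=p$.
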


\begin{remark}
For the semilinear wave equation $\partial_t^2 u-\Delta u =|u|^p$ $(p>1)$,
the critical exponent $p_0(n)$ in Strauss' conjecture (see \cite{Strauss})
is determined by the algebraic equation $(n-1)p_0^2(n)-(n+1)p_0(n)-2=0$
(so far the global existence of small data solution $u$  for $p>p_0(n)$  or the blowup
of solution $u$ for $1<p<p_0(n)$  have been proved in \cite{Gla1}-\cite{Gls}, \cite{Joh}-\cite{Ls}, \cite{Sid} and the references therein).
The finite time blowup  for the critical wave equations $\partial_t^2 u-\Delta u =|u|^{p_0(n)}$
has been established in \cite{Gla1},\cite{Joh}, \cite{Sch}, and \cite{Yor}-\cite{Zhou}, respectively.
Motivated by the techniques in \cite{Yor} and \cite{HWYin1}, we prove the blowup result
for the critical semilinear Tricomi equation in \eqref{equ:original}.
\end{remark}

\begin{remark}
For brevity, we only study the semilinear Tricomi equation instead of the
generalized semilinear Tricomi equation $\partial_t^2 u-t^m\Delta u =|u|^p$
$(m\in\Bbb N)$ in problem \eqref{equ:original}. In fact, by the methods in Theorem 1.1
and Theorem 1.2, one can establish the analogous results to Theorem 1.1-Theorem 1.2 for the
generalized semilinear Tricomi equation.
\end{remark}

\begin{remark}
It follows from a direct computation that $p_{crit}(2)=\f{3+\sqrt{33}}{4}$
and $p_{conf}(2)=3$ in Theorem 1.2.
\end{remark}

For $n=1$, the linear equation $\p_t^2u-t\p_x^2u=0$ is the well-known Tricomi equation
which arises in transonic gas dynamics.
There are extensive results
for both linear and semilinear Tricomi equations in $n$ space dimensions
$(n\in\Bbb N)$. For
instances, with respect to the linear Tricomi equation,
the authors in \cite{Bar}, \cite{Yag1} and \cite{Yag3} have computed its
fundamental solution explicitly; with respect to the semilinear Tricomi equation $\p_t^2u-t\Delta u=f(t,x,u)$,
under some certain assumptions on the function
$f(t,x,u)$, the authors in \cite{Gva} and \cite{Lup1}-\cite{Lup4}
have obtained a series of interesting  results on the existence and uniqueness of solution $u$
in bounded domains under Tricomi, Goursat or Dirichlet boundary conditions respectively
in the mixed type case, in the degenerate hyperbolic setting or in the degenerate
elliptic setting; with respect to the  Cauchy problem of semilinear Tricomi equations,
the authors in \cite{Rua1, Rua2, Rua3, Rua4} established the local existence as well
as the singularity structure of low regularity solutions in the
degenerate hyperbolic region and the elliptic-hyperbolic mixed region,
respectively. In addition, by establishing some classes of $L^p$-$L^q$  estimates
for the solution $v$ of linear equation $\partial_t^2v -t\triangle v=F(t,x)$,
the author in \cite{Yag2} obtained some results about the global existence or the
blowup of solutions to problem (1.1) when the exponent
$p$ belongs to a certain range, however, there was a
gap between the global existence interval and the blowup interval.
By establishing the Strichartz inequality and the weighted Strichartz inequality
for the linear Tricomi equation, respectively, we have shown the global existence of small data solution $u$ to
problem \eqref{equ:original} for $p>p_{crit}(n)$ ($n\ge 3$) in  \cite{HWYin1}- \cite{HWYin2}.

We now comment on the proof of Theorem~\ref{thm:1.1} and Theorem~\ref{thm:1.2}.
To prove Theorem~\ref{thm:1.1}, we define the
function $G(t)=\int_{\R^n}u(t,x)\ \md x$. By
applying some crucial techniques for the modified Bessel function as
in \cite{Hon, Rua3}, and motivated by \cite{Yor} and \cite{HWYin1},  we can derive a
Riccati-type ordinary differential inequality for $G(t)$ through a delicate
analysis of \eqref{equ:original}, which is stronger than the ordinary differential
inequality in \cite{HWYin1} (see (2.1) of \cite{HWYin1}). From this and Lemma 2.1 in \cite{Yor},
the blowup result for \(p=p_{crit}(n)\) in Theorem 1.1 is established under the positivity assumptions of $f$ and $g$.
To prove the global existence result in
Theorem~\ref{thm:1.2}, we require to establish angular Strichartz estimates for the
Tricomi operator $\p_t^2-t\Delta$ as in the treatment on the 2-D linear wave operator in \cite{Smi}.
In this process, a series of
inequalities are derived by applying an explicit formula for the solutions
of linear Tricomi equations and by utilizing some
basic properties of related Fourier integral operators and some classical results in harmonic analysis.
Based on the resulting Strichartz inequalities and the contractible mapping principle,
we eventually complete the proof of Theorem~\ref{thm:1.2}.
Here we point out that compared with the techniques of \cite{Smi} for deriving the Strichartz inequality with
angular mixed-norm of 2-D linear wave equation, due to the influences of degeneracy
and variable coefficients in the linear equation, it is more involved
and complicated to give the related analysis on the resulting Fourier integral operator from
linear Tricomi equation.

This paper is organized as follows: In Section 2,
we complete the proof of Theorem 1.1. In Section 3,
some Strichartz estimates with angular mixed norms for the linear Tricomi equation
are established. In Section 4, by applying the results in Section 3, Theorem~\ref{thm:1.2} is proved.

\section{Proof of Theorem~\ref{thm:1.1}}
Before starting the proof of Theorem~\ref{thm:1.1}, we cite a blowup lemma from \cite{Yor}.
\begin{lemma}\label{lem:2.1}
Let \(p>1\), \(a\geq1\), and \((p-1)a=q-2\). Suppose \(G\in C^2[0,T)\) satisfies that for \(t\geq T_0>0\),
\begin{align}
G(t)\geq & K_0(t+M)^a, \label{equ:2.01}\\
G''(t)\geq & K_1(t+M)^{-q}G(t)^p, \label{equ:2.02}
\end{align}
where \(K_0\), \(K_1\), \(T_0\) and \(M\) are some positive constants. Fixing \(K_1\), there exists a positive constant \(c_0\), independent of \(M\) and \(T_0\) such that if \(K_0\geq c_0\), then \(T<\infty\).
\end{lemma}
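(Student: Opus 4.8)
The plan is to argue by contradiction. Assuming $T=\infty$, I would extract from \eqref{equ:2.01}--\eqref{equ:2.02} a first-order Bernoulli-type inequality $G'\gtrsim(t+M)^{-q/2}G^{(p+1)/2}$, integrate it, and show that the lower bound $G\ge K_0(t+M)^a$ then forces $K_0$ below an explicit threshold $c_0$ depending only on $p$, $a$ (equivalently $q=(p-1)a+2$) and $K_1$; the contrapositive is the assertion.

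So suppose $T=\infty$. The first step is to note that $G$ is eventually strictly increasing: by \eqref{equ:2.01}, $G>0$ on $[T_0,\infty)$, hence $G''>0$ there by \eqref{equ:2.02}, so $G'$ is strictly increasing; if $G'\le0$ on all of $[T_0,\infty)$ then $G$ would be nonincreasing, contradicting $G(t)\ge K_0(t+M)^a\to\infty$. Thus $G'(T_1)>0$ for some $T_1\ge T_0$, and then $G'(t)\ge G'(T_1)>0$ with $G$ strictly increasing for $t\ge T_1$.

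Next I would multiply \eqref{equ:2.02} by $G'(t)\ge0$ and integrate over $[T_1,t]$, dropping the nonnegative term $\tfrac12G'(T_1)^2$; with the monotone change of variable $u=G(s)$ and the bound $(s+M)^{-q}\ge(t+M)^{-q}$ valid for $s\le t$,
\[
\tfrac12 G'(t)^2\ \ge\ K_1\int_{T_1}^t(s+M)^{-q}G(s)^pG'(s)\,\md s\ \ge\ \frac{K_1}{p+1}(t+M)^{-q}\bigl(G(t)^{p+1}-G(T_1)^{p+1}\bigr).
\]
Since $G(t)\to\infty$, pick $T_2\ge T_1$ with $G(t)^{p+1}\ge2\,G(T_1)^{p+1}$ for $t\ge T_2$, so $G(t)^{p+1}-G(T_1)^{p+1}\ge\tfrac12G(t)^{p+1}$ and therefore $G'(t)\ge\bigl(\tfrac{K_1}{p+1}\bigr)^{1/2}(t+M)^{-q/2}G(t)^{(p+1)/2}$ on $[T_2,\infty)$. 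Dividing by $G^{(p+1)/2}$ and integrating over $[T_2,t]$,
\[
\frac{2}{p-1}\,G(T_2)^{-(p-1)/2}\ \ge\ \frac{2}{p-1}\bigl(G(T_2)^{-(p-1)/2}-G(t)^{-(p-1)/2}\bigr)\ \ge\ \Bigl(\frac{K_1}{p+1}\Bigr)^{1/2}\!\int_{T_2}^t(s+M)^{-q/2}\,\md s .
\]

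Here the critical relation $(p-1)a=q-2$ enters twice. It forces $q>2$ (because $a\ge1$ and $p>1$), so $\int_{T_2}^\infty(s+M)^{-q/2}\,\md s=\frac{(T_2+M)^{1-q/2}}{q/2-1}<\infty$; and, with \eqref{equ:2.01}, it gives $G(T_2)^{-(p-1)/2}\le K_0^{-(p-1)/2}(T_2+M)^{-a(p-1)/2}=K_0^{-(p-1)/2}(T_2+M)^{1-q/2}$. Letting $t\to\infty$ in the second display and inserting these, the common factor $(T_2+M)^{1-q/2}$ cancels and one is left with
\[
\frac{2}{p-1}\,K_0^{-(p-1)/2}\ \ge\ \Bigl(\frac{K_1}{p+1}\Bigr)^{1/2}\frac{1}{q/2-1},\qquad\text{i.e.}\qquad K_0\ \le\ \Bigl(\frac{(q-2)\sqrt{p+1}}{(p-1)\sqrt{K_1}}\Bigr)^{2/(p-1)}=:c_0 .
\]
Hence if $K_0>c_0$ the standing assumption $T=\infty$ is impossible, so $T<\infty$; and $c_0$ evidently depends only on $p$, $q$ and $K_1$. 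The delicate point is precisely this last cancellation: the powers of $(T_2+M)$ on the two sides agree only because $a(p-1)/2=(q-2)/2=q/2-1$, which is what makes $c_0$ independent of $M$ and $T_0$ (and of the auxiliary times $T_1$, $T_2$, which are allowed to depend on $G$). Apart from that, the scheme is the classical Bernoulli/energy blow-up argument, and I expect no other substantial obstacle.
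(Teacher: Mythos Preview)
Your argument is correct. Note, however, that the paper does not supply its own proof of this lemma: it is quoted verbatim from \cite{Yor} (Yordanov--Zhang), with the introductory sentence ``we cite a blowup lemma from \cite{Yor}.'' So there is no in-paper proof to compare against.

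That said, what you have written is essentially the standard proof, and it is the one given in \cite{Yor}. The skeleton there is identical: convexity of $G$ forces eventual monotonicity; multiplying \eqref{equ:2.02} by $G'$ and integrating yields a first-order inequality $G'\gtrsim (t+M)^{-q/2}G^{(p+1)/2}$; separating variables and integrating once more produces a bound of the form $G(T_2)^{-(p-1)/2}\gtrsim (T_2+M)^{1-q/2}$; and the scaling identity $(p-1)a=q-2$ is exactly what makes the powers of $(T_2+M)$ cancel, leaving a threshold $c_0=c_0(p,q,K_1)$ independent of $M$, $T_0$, and the auxiliary times. Your handling of the endpoint issue (choosing $T_2$ so that $G(t)^{p+1}\ge 2G(T_1)^{p+1}$) and your observation that $q>2$ follows from $a\ge1$, $p>1$ are both clean. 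The only cosmetic point is that your contrapositive gives $K_0>c_0\Rightarrow T<\infty$ rather than $K_0\ge c_0$, but since the lemma only asserts existence of \emph{some} $c_0$, replacing your $c_0$ by any strictly larger constant recovers the stated form.
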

With this lemma and $G(t)=\int_{\R^n}u(t,x)\ \md x$, our subsequent tasks are to derive \eqref{equ:2.01} and \eqref{equ:2.02}
for  the solution $u$ of problem \eqref{equ:original}. It follows from Section 2 of \cite{HWYin1} that
\begin{align}\label{1-0}
G''(t)=\int_{\R^n}|u(t,x)|^p \md x\geq C(M+t)^{-\frac{3}{2}\,n(p-1)}\,|G(t)|^p.
\end{align}
This means that \eqref{equ:2.02} holds for \(q=\frac{3}{2}\,n(p-1)\). Next, strongly motivated by the techniques in \cite{Yor} and \cite{HWYin1},
we focus on the derivation of \eqref{equ:2.01},
which is divided into the following three steps:

\vskip 0.2 true cm

\textbf{Step 1. Some reductions}

\vskip 0.2 true cm

Let
\[\bar{u}=\f{1}{\omega_n}\int_{\mathbb{S}^{n-1}}u(t,r\theta)\md \theta\]
be the spherical average of \(u\). Then applying the spherical average on both sides of \eqref{equ:original} yields
\begin{equation}\label{A-0}
\partial_t^2 \bar{u}-t \overline{\Delta u}=\overline{|u|^p}.
\end{equation}
By Daboux's identity, one has \(\overline{\Delta u}=\Delta\bar{u}\). On the other hand, it follows from H\"{o}lder's inequality
that
\[|\bar{u}|=\bigg|\f{1}{\omega_n}\int_{\mathbb{S}^{n-1}}u(t,r\theta)\md \theta\bigg|\leq \bigg(\f{1}{\omega_n}\int_{\mathbb{S}^{n-1}}|u|^p\md \theta\bigg)^{\f{1}{p}}\bigg(\int_{\mathbb{S}^{n-1}}\f{\md\theta}{\omega_n}\bigg)^{\f{1}{p'}}\leq\big(\overline{|u|^p}\big)^{\f{1}{p}}.\]
This, together with \eqref{A-0}, yields
\begin{equation}\label{equ:2.1}
\partial_t^2 \bar{u}-t \Delta\bar{u}\geq|\bar{u}|^p.
\end{equation}
Thus we can assume that \(u\) is radial since the blowup of \(\bar{u}\) obviously
yields the blowup of \(u\).
Let \(\omega\in\R^n\) be a unit vector. The Radon transform of \(u\) with respect to the variable \(x\) is defined as
\begin{equation}\label{equ:2.2}
\mathbf{R}(u)(t,\rho)=\int_{x\cdot\omega=\rho}u(t,x)\md S_x,
\end{equation}
where $\rho\in\Bbb R$, \(\md S_x\) is the Lebesque measure on the hyper-plane \(\{x: x\cdot\omega=\rho\}\).
From \eqref{equ:2.2} and the radial assumption of \(u(t,\cdot)\), it is easy to see
\begin{equation}\label{equ:2.3}
\begin{split}
\mathbf{R}(u)(t,\rho) & = \int_{\{x': x'\cdot\omega=0\}}u(t,\rho\omega+x')\md S_x \\
& =c_n\int_{|\rho|}^{\infty}u(t,r)(r^2-\rho^2)^{\f{n-3}{2}}r\md r.
\end{split}
\end{equation}
Obviously, \(\mathbf{R}(u)(t,\rho)\) is independent of \(\omega\).

\vskip 0.2 true cm

\textbf{Step 2. The lower bound of \(\mathbf{R}(u)\)}

\vskip 0.2 true cm

From Page 3 of \cite{Hel}, we have 
\begin{equation}\label{equ:2.4}
  \mathbf{R}(\Delta u)(t,\rho) =\partial_\rho^2\mathbf{R}(u)(t,\rho).
\end{equation}
Since \(u\) is a solution of \eqref{equ:original}, it follows from \eqref{equ:2.4} that \(\mathbf{R}(u)\) solves
\begin{equation*}
  \left\{ \enspace
\begin{aligned}
&\partial_t^2 \mathbf{R}(u)-t\partial_\rho^2\mathbf{R}(u) =\mathbf{R}(|u|^p), \quad (t, \rho)\in \R^{1+1}_{+},\\
&\mathbf{R}(u)(0,\rho)=\mathbf{R}(f), \quad \partial_{t} \mathbf{R}(u)(0,\rho)=\mathbf{R}(g).
\end{aligned}
\right.
\end{equation*}
By Lemma 2.1 in \cite{Yag3} and Theorem 3.1 in \cite{Yag1}, we have
\begin{equation}\label{equ:2.5}
\begin{split}
\mathbf{R}(u)&(t,\rho)  =C\int_{0}^{1}v_{\mathbf{R}(f)}\big(\phi(t)s,\rho\big)(1-s^2)^{-\f{5}{6}}\md s+C t\int_{0}^{1}
v_{\mathbf{R}(g)}\big(\phi(t)s,\rho\big)(1-s^2)^{-\f{1}{6}}\md s \\
& +C\int_0^t\int_{\rho-\phi(t)+\phi(s)}^{\rho+\phi(t)-\phi(s)}\big(\phi(t)+\phi(s)+\rho-\rho_1\big)^{-\gamma}
\big(\phi(t)+\phi(s)-\rho+\rho_1\big)^{-\gamma}F\big(\f{1}{6},\f{1}{6},1,z\big)\\
&\qquad\qquad \qquad \qquad \quad \times \mathbf{R}(|u|^p)(s,\rho_1)\md\rho_1\md s,
\end{split}
\end{equation}
where $C>0$ is a constant, $z=\frac{(\rho-\rho_1+\phi(t)-\phi(s))(\rho-\rho_1-\phi(t)+\phi(s))}{(\rho-\rho_1+\phi(t)+\phi(s))(\rho-\rho_1-\phi(t)-\phi(s))}$
with $\phi(t)=\f{2}{3}t^{\f{3}{2}}$, $F\big(\f{1}{6},\f{1}{6},1,z\big)$ is the hypergeometric function, and the function \(v_\varphi\)
solves the 1-D wave equation
\begin{equation*}
  \left\{ \enspace
\begin{aligned}
&\partial_t^2 v-\partial_x^2v =0, \quad (t, x)\in \R^{1+1}_{+},\\
&v(0,x)=\varphi, \quad \partial_{t} v(0,x)=0.
\end{aligned}
\right.
\end{equation*}
Note that \eqref{equ:2.3} together with the non-negativity of $f$ and $g$ shows \(\mathbf{R}(f)\geq0\) and \(\mathbf{R}(g)\geq0\).
In addition, by D'Alembert's formula, we obtain \(v_{\mathbf{R}(f)}\geq0\) and \(v_{\mathbf{R}(g)}\geq0\).
Hence,
\begin{align*}
\mathbf{R}(u)(t,\rho) & \geq C_\gamma\int_0^t\int_{\rho-\phi(t)+\phi(s)}^{\rho+\phi(t)-\phi(s)}\big(\phi(t)+\phi(s)+\rho-\rho_1\big)^{-\f{1}{6}}
\big(\phi(t)+\phi(s)-\rho+\rho_1\big)^{-\f{1}{6}} \\
& \qquad\qquad\qquad \qquad\quad  \times F\big(\f{1}{6},\f{1}{6},1,z\big)\mathbf{R}(|u|^p)(s,\rho_1)\md\rho_1\md s.
\end{align*}
Note that
\[z=\frac{(\phi(t)-\phi(s))^2-(\rho-\rho_1)^2}{(\phi(t)+\phi(s))^2-(\rho-\rho_1)^2}\in[0,1].\]
Then by page 59 of \cite{Erd1}, we arrive at
\begin{equation*}
\begin{split}
F(\f{1}{6},\f{1}{6},1,z)=&\frac{1}{\Gamma(\f{1}{6})\Gamma(\f{5}{6})}\int_0^1t^{-\f{5}{6}}(1-t)^{-\f{1}{6}}
(1-zt)^{-\f{1}{6}}dt \\
\geq &\frac{1}{\Gamma(\f{1}{6})\Gamma(\f{5}{6})}\int_0^1t^{-\f{5}{6}}(1-t)^{-\f{1}{6}}dt \\
= &\frac{1}{\Gamma(\f{1}{6})\Gamma(\f{5}{6})}B(\f{1}{6}, \f{5}{6})\\
= &\f{1}{\Gamma(1)}=1.
\end{split}
\end{equation*}
Therefore,
\begin{equation}\label{equ:2.6}
  \mathbf{R}(u)(t,\rho)\geq C\int_0^t\int_{\rho-\phi(t)+\phi(s)}^{\rho+\phi(t)-\phi(s)}\big((\phi(t)+\phi(s))^2-(\rho-\rho_1)^2\big)^{-\f{1}{6}}
  \mathbf{R}(|u|^p)(s,\rho_1)\md\rho_1\md s.
\end{equation}
Notice that the support of \(u(s,\cdot)\) is contained in the ball
$B\big(0, M+\phi(s)\big)=:\{x\in \R^n: |x|\leq M+\phi(s)\}$.
On the other hand, if \(|\rho_1|>M+\phi(s)\), then for any vector \(y\in \R^n\) which is perpendicular to \(\omega\), one has
\[|\rho_1\omega+y|=\sqrt{|\rho_1|^2+y^2}\geq|\rho_1|>\phi(s)+M.\]
This yields that for \(|\rho_1|>M+\phi(s)\),
\[\mathbf{R}(|u|^p)(s,\rho_1)  = \int_{\{y: y\cdot\omega=0\}}u(s,\rho_1\omega+y)\md S_y=0.\]
Thus, \(\operatorname{supp}\mathbf{R}(|u|^p)(s,\cdot)\subseteq B\big(0, M+\phi(s)\big)\) holds.
From now on,  we can assume \(\rho\geq0\). If
\begin{equation}\label{equ:2.7}
0\leq\phi(s)\leq\phi(s_1)=:\f{\phi(t)-\rho-M}{2},
\end{equation}
then
\[\rho+\phi(t)-\phi(s)\geq\phi(s)+M, \quad \rho-\phi(t)+\phi(s)\leq-\big(\phi(s)+M\big).\]
From this, we arrive at
\begin{equation}\label{equ:2.8}
\begin{split}
\mathbf{R}(u)(t,\rho)& \geq C\int_0^{s_1}\int_{\rho-\phi(t)+\phi(s)}^{\rho+\phi(t)-\phi(s)}\big((\phi(t)+\phi(s))^2-(\rho-\rho_1)^2\big)^{-\f{1}{6}}
\mathbf{R}(|u|^p)(s,\rho_1)\md\rho_1\md s \\
& =C\int_0^{s_1}\int_{-\infty}^\infty\big((\phi(t)+\phi(s))^2-(\rho-\rho_1)^2\big)^{-\f{1}{6}}
\mathbf{R}(|u|^p)(s,\rho_1)\md\rho_1\md s.
\end{split}
\end{equation}
By \eqref{equ:2.7}, one has
\begin{align*}
\phi(t)+\phi(s)+\rho-\rho_1 & \leq\phi(t)+\phi(s)+\rho+\phi(s)+M\leq\phi(t)+\phi(t)-\rho-M+\rho+M\leq2\phi(t),\\
\phi(t)+\phi(s)-\rho+\rho_1 & \leq\phi(t)+\phi(s)-\rho+\phi(s)+M\leq2\big(\phi(t)-\rho\big).
\end{align*}
Together with this and \eqref{equ:2.8}, we deduce
\begin{multline}\label{equ:2.9}
  \mathbf{R}(u)(t,\rho)\geq C\int_0^{s_1}\int_{-\infty}^\infty\big(\phi(t)-\rho\big)^{-\f{1}{6}}\phi(t)^{-\f{1}{6}}
  \mathbf{R}(|u|^p)(s,\rho_1)\md\rho_1\md s \\
  \begin{aligned}
  & =\big(\phi(t)-\rho\big)^{-\f{1}{6}}\phi(t)^{-\f{1}{6}}\int_0^{s_1}\int_{-\infty}^\infty\int_{\{y': y'\cdot\omega=0\}}u(s,\rho_1\omega+y')\md S_{y'}\md\rho_1\md s \\
  & =\big(\phi(t)-\rho\big)^{-\f{1}{6}}\phi(t)^{-\f{1}{6}}\int_0^{s_1}\int_{\R^n}|u(s,y)|^p\md y\md s.
  \end{aligned}
\end{multline}
On the other hand, by (2.17) of \cite{HWYin1}, one has
\begin{equation}\label{equ:2.10}
  \int_{\R^n}|u(s,y)|^p\md y\geq Cs^{\frac{p}{2}}\left(M+\phi(s)\right)^{n-1-\frac{n}{2}p}.
\end{equation}
Substituting \eqref{equ:2.10} into \eqref{equ:2.9} yields
\begin{equation}\label{equ:2.11}
\begin{split}
\mathbf{R}(u)(t,\rho) & \geq C\big(\phi(t)-\rho\big)^{-\f{1}{6}}\phi(t)^{-\f{1}{6}}\int_0^{s_1}
s^{\frac{p}{2}}\left(M+\phi(s)\right)^{n-1-\frac{n}{2}p}\md s \\
& =C\big(\phi(t)-\rho\big)^{-\f{1}{6}}\phi(t)^{-\f{1}{6}}\int_0^{s_1}s^{\frac{p}{2}+\f{3}{2}\left(n-1-\frac{np}{2}\right)}\md s.
\end{split}
\end{equation}
To guarantee that the integral in \eqref{equ:2.11} is convergent, we shall need
\[\frac{p}{2}+\f{3}{2}\left(n-1-\frac{np}{2}\right)>-1.\]
This is achieved by $p=p_{\crit}(n)<p_{\conf}(n)=\frac{3n+6}{3n-2}$ and direct computation.
Thus we conclude that for \(n\geq2\),
\begin{equation}\label{equ:2.12}
\mathbf{R}(u)(t,\rho)\geq C\big(\phi(t)-\rho\big)^{-\f{1}{6}}\phi(t)^{-\f{1}{6}}\big(\phi(t)-\rho-M\big)^{n-1-\frac{np}{2}+\f{p+2}{3}}.
\end{equation}

\vskip 0.2 true cm
\textbf{Step 3. The lower bound of $\int_{\R^n}|u(t,x)|^p\md x$}
\vskip 0.2 true cm

Following (2.16) of \cite{Yor}, one can introduce the transformation
\[\mathbf{T}(f)(\rho)=\f{1}{|\phi(t)-\rho+M|^{\f{n-1}{2}}}\int_{\rho}^{\phi(t)+M}f(r)|r-\rho|^{\f{n-3}{2}}\md r\]
and derive
\begin{equation}\label{equ:2.13}
  \|\mathbf{T}(f)\|_{L^p}\leq C\|f\|_{L^p}.
\end{equation}

In fact, if \(n\geq3\), then it is easy to see that
\[\big|\mathbf{T}(f)(\rho)\big|\leq\f{2}{2|\phi(t)-\rho+M|}\int_{2\rho-\big(\phi(t)+M\big)}^{\phi(t)+M}|f(r)|\md r\leq 2M(|f|)(\rho),\]
where \(M(|f|)\) is the maximal function of \(f\). Hence there exists a constant \(C>0\) such that \eqref{equ:2.13} holds.

For \(n=2\), at first we prove that \(\mathbf{T}\) maps \(L^\infty\) to \(L^\infty\) and \(L^1\) to $L^{1,\infty}$
(weak \(L^1\) space),
respectively. If so, by the Marcinkiewicz interpolation theorem, then \eqref{equ:2.13} holds for $n=2$. 

In fact, it follows from a direct computation that for \(\rho>0\),
\begin{multline*}
  \big|\mathbf{T}(f)(\rho)\big| =\f{1}{|\phi(t)-\rho+M|^{\f{1}{2}}}\int_{\rho}^{\phi(t)+M}f(r)|r-\rho|^{-\f{1}{2}}\md r \\
  \begin{aligned}
  & \leq\f{\|f\|_{L^\infty([0,\phi(t)+M])}}{|\phi(t)-\rho+M|^{\f{1}{2}}}\int_{\rho}^{\phi(t)+M}|r-\rho|^{-\f{1}{2}}\md r \\
  & =2\|f\|_{L^\infty([0,\phi(t)+M])}\f{1}{|\phi(t)-\rho+M|^{\f{1}{2}}}|\phi(t)-\rho+M|^{\f{1}{2}} \\
  & =2\|f\|_{L^\infty([0,\phi(t)+M])},
  \end{aligned}
\end{multline*}
which yields the \(L^\infty-L^\infty\) estimate of operator $\mathbf{T}$. 
Next we derive the \(L^1-L^{1,\infty}\) estimate
of $\mathbf{T}$. Suppose
\(f\in L^1([0,\phi(t)+M])\). Let
\begin{align*}
  g(\rho) & =\f{1}{|\phi(t)-\rho+M|^{\f{1}{2}}}, \\
  h(\rho) & =\int_{\rho}^{\phi(t)+M}f(r)|r-\rho|^{-\f{1}{2}}\md r.
\end{align*}
Denote $d_\varphi(\alpha)=\big|\{0\leq \rho\leq\phi(t)+M: \varphi(\rho)>\alpha\}\big|$
as the distribution function of \(\varphi\). It is known that for \(0<\alpha<\infty\) and measurable functions \(f_1\), \(f_2\)
\[d_{f_1f_2}(\alpha)\leq d_{f_1}(\alpha^{\f{1}{2}})+d_{f_2}(\alpha^{\f{1}{2}}).\]
Note that
\[d_{g}(\alpha^{\f{1}{2}})=\big|\{0\leq \rho\leq\phi(t)+M: g(\rho)>\alpha\}\big|=\f{1}{\alpha}.\]
In addition,
\[|h(\rho)|\leq\int_{0}^{\phi(t)+M}|f(r)||r-\rho|^{-\f{1}{2}}\md r=f\ast \f{1}{|r|^{\f{1}{2}}}.\]
Since \(\f{1}{|r|^{\f{1}{2}}}\in L^{2,\infty}([0,\phi(t)+M])\) and \(f\in L^1([0,\phi(t)+M])\), by Young's inequality, we have
\(h\in L^{2,\infty}([0,\phi(t)+M])\). Therefore,
\[\alpha d_{gh}(\alpha)\leq \alpha d_{g}(\alpha^{\f{1}{2}})+\alpha d_{h}(\alpha^{\f{1}{2}})
\leq C,\]
which means \(\mathbf{T}(f)(\rho)=g(\rho)h(\rho)\in L^{1,\infty}([0,\phi(t)+M])\). 
Then an application of Marcinkiewicz interpolation theorem yields
\begin{equation}\label{equ:2.131}
\|\mathbf{T}(f)\|_{L^p([0,\phi(t)+M])}\leq C_0\|f\|_{L^p([0,\phi(t)+M])},
\end{equation}
where \(C_0>0\) is a uniform constant independent of \(t\). Due to \(\operatorname{supp}u(t,\cdot)\subseteq[0,\phi(t)+M]\), 
the inequality \eqref{equ:2.131} is enough for the application in the proof of Theorem~\ref{thm:1.1}.

Applying \eqref{equ:2.13} or \eqref{equ:2.131} to the function
\begin{equation*}
  f(r)=\left\{ \enspace
\begin{aligned}
&|u(t,r)|r^{\f{n-1}{p}}, && r\geq0,\\
&0, && r<0,
\end{aligned}
\right.
\end{equation*}
we have
\begin{align}\label{equ:2.14}
\int_{0}^{\phi(t)+M}\bigg(&\f{1}{|\phi(t)-\rho+M|^{\f{n-1}{2}}}\int_{\rho}^{\phi(t)+M}|u(t,r)|r^{\f{n-1}{p}}|r-\rho|^{\f{n-3}{2}}\md r\bigg)^p\md \rho \\
& \leq C\int_{0}^{\infty}|u(t,r)|^pr^{n-1}\md r= C\int_{\R^n}|u(t,x)|^p\md x.
\end{align}
When \(r\geq\rho\), we arrive at
\begin{equation*}
  r^{\f{n-1}{p}}=r^{\f{n-1}{2}}r^{\f{n-1}{p}-\f{n-1}{2}}\geq\left\{ \enspace
\begin{aligned}
&r^{\f{n-1}{2}}\rho^{\f{n-1}{p}-\f{n-1}{2}}, && 1<p\leq2,\\
&r^{\f{n-1}{2}}\big(\phi(t)+M\big)^{\f{n-1}{p}-\f{n-1}{2}}, && p>2.
\end{aligned}
\right.
\end{equation*}
Next we only treat  the case of \(1<p\leq2\) since the treatment for \(p>2\) is completely similar. When \(1<p\leq2\),
it follows from \eqref{equ:2.14} that
\newpage
\begin{multline}\label{equ:2.15}
  \int_{0}^{\phi(t)+M}\bigg(\f{1}{|\phi(t)-\rho+M|^{\f{n-1}{2}}}\int_{\rho}^{\phi(t)+M}|u(t,r)|r^{\f{n-1}{2}}|r-\rho|^{\f{n-3}{2}}\md r\bigg)^p\rho^{(n-1)(1-p/2)}\md\rho \\
  \leq C\int_{\R^n}|u(t,x)|^p\md x.
\end{multline}
On the other hand,
\begin{align}\label{equ:2.16}
  \mathbf{R}(u)(t,\rho)=c_n\int_{|\rho|}^{\infty}u(t,r)(r^2-\rho^2)^{\f{n-3}{2}}r\md r\leq c_n\int_{|\rho|}^{\infty}u(t,r)r^{\f{n-1}{2}}(r-\rho)^{\f{n-3}{2}}\md r.
\end{align}
Substituting \eqref{equ:2.16} into \eqref{equ:2.15} yields
\[\int_{\R^n}|u(t,x)|^p\md x\geq C\int_{0}^{\phi(t)+M}\f{\big(\mathbf{R}(u)(t,\rho)\big)^p}{\big(\phi(t)-\rho+M\big)^{\f{(n-1)p}{2}}}\rho^{(n-1)(1-p/2)}\md\rho.\]
By the bound of \(\mathbf{R}(u)\) in \eqref{equ:2.12}, we deduce
\begin{multline}\label{equ:2.17}
  \int_{\R^n}|u(t,x)|^p\md x \\
  \geq C\int_{0}^{\phi(t)+M}\f{\big(\phi(t)-\rho\big)^{-\gamma}\phi(t)^{-\gamma}\big(\phi(t)-\rho-M\big)^{p\big(n-1-\frac{np}{2}
  +\f{p+2}{3}\big)}}{\big(\phi(t)-\rho+M\big)^{\f{(n-1)p}{2}}}\rho^{(n-1)(1-p/2)}\md\rho.
\end{multline}
If \(\rho\in(0, \phi(t)-M-1)\), then there exists a constant \(C_M>0\) such that for all \(\phi(t)>2(M+1)\),
\[\phi(t)-\rho+M\leq C_M\big(\phi(t)-\rho-M\big), \quad \phi(t)-\rho\leq C_M\big(\phi(t)-\rho-M\big).\]
This observation together with \eqref{equ:2.17} yields
\begin{align}\label{A-1}
\int_{\R^n}|u(t,x)|^p\md x\geq C\int_{0}^{\phi(t)+M}\frac{\rho^{(n-1)(1-p/2)}\phi(t)^{-\f{p}{6}}}
{\big(\phi(t)-\rho-M\big)^{\big(\f{n}{2}-\f{1}{3}\big)p^2-\f{n}{2}p}}\md\rho.
\end{align}
Note that for $p=p_{crit}(n)$,
\[\bigg(\f{n}{2}-\f{1}{3}\bigg)p^2-\f{n}{2}p=1.\]
Thus we have from \eqref{A-1} that for $p=p_{crit}(n)$,
\begin{align}\label{equ:2.18}
\int_{\R^n}|u(t,x)|^p\md x &\geq C\phi(t)^{-\f{p}{6}}\int_{0}^{\phi(t)+M}\frac{\rho^{(n-1)(1-p/2)}}{\phi(t)-\rho-M}\md\rho\no \\
& \geq C\phi(t)^{-\f{p}{6}}\phi(t)^{(n-1)(1-p/2)}\int_{\f{\phi(t)-M-1}{2}}^{\phi(t)-M-1}\f{1}{\phi(t)-\rho-M}\md\rho \no\\
& \geq C\phi(t)^{n-1-\f{np}{2}+\f{p}{3}}\ln\big(\phi(t)-M+1\big).
\end{align}
Note that the term \(\ln\big(\phi(t)-M+1\big)\) can be sufficiently large when \(t\) is large, and if the power of \(t\)
in the right hand side of \eqref{equ:2.18} satisfies
\begin{equation}\label{equ:2.19}
\sigma=:\f{3}{2}\Big(n-1-\f{np}{2}+\f{p}{3}\Big)>-1,
\end{equation}
then there is a large constant \(K_0>0\) such that for large $t>0$ and $p=p_{crit}(n)$,
\[G''(t)=\int_{\R^n}|u(t,x)|^p\md x\geq K_0t^{\f{p}{2}+\f{3}{2}\big(n-1-\f{np}{2}\big)}\geq CK_0(t+M)^{\f{p}{2}+\f{3}{2}\big(n-1-\f{np}{2}\big)},\]
and
\begin{equation}\label{equ:2.20}
G(t)\geq CK_0(t+M)^{\f{p}{2}+2+\f{3}{2}\big(n-1-\f{np}{2}\big)}.
\end{equation}
Next we turn to verify \eqref{equ:2.19}. By the condition
\[p=p_{\crit}(n)<p_{\conf}(n)=\f{3n+6}{3n-2},\]
direct computation yields
\begin{equation*}
  \begin{split}
     \sigma= &\f{3}{2}(n-1)-\f{1}{2}\Big(\f{3}{2}n-1\Big)p \\
       & >\f{3}{2}(n-1)-\f{1}{2}\Big(\f{3}{2}n-1\Big)p_{\conf}(n) \\
     & =\f{3n}{4}-3.
  \end{split}
\end{equation*}
If \(n\geq3\), then
\[\sigma>-\f{3}{4}>-1.\]
If $n=2$, then
\[\sigma=\f32(1-\f23 p_{crit}(2))=\f{3-\sqrt{33}}{4}>-1.\]
Hence \eqref{equ:2.19} is valid for all $n\ge 2$. By \eqref{equ:2.20} and \eqref{1-0}, choosing $a=\f{p}{2}+\f{3}{2}\big(n-1-\f{np}{2}\big)+2$
and $q=\f{3n}{2}(p-1)$ with $p=p_{crit}(n)$ in Lemma 2.1, then  all the assumptions
of  Lemma 2.1 hold. Therefore, Theorem 1.1 is shown by Lemma 2.1.

\section{Strichartz estimates in angular mixed norm spaces}\label{sec:global}
Before establishing Strichartz estimates for the linear Tricomi
operator, we recall two important results. The first one is a minor variant of  \cite[Lemma~3.8]{Ls}, and the second one comes from
\cite[Theorem~1.2]{Chr2}.

\begin{lemma}\label{lem3.1}
Let $\ds \beta\in C_0^\infty\left((1/2,2)\right)$ and
$\ds \sum\limits_{j=-\infty}^\infty\beta\left(2^{-j}\tau\right) \equiv1$ for
$\tau>0$.  Define the Littlewood-Paley operators as
\[
G_j(t,x)=(2\pi)^{-n}\int_{\R^n}e^{ix\cdot\xi}\beta\left(2^{-j}|\xi|\right)
\hat{G}(t,\xi)\,d\xi, \quad j\in \Z.
\]
Then
\begin{align*}
\| G\|_{L^s_tL^q_x} \leq C\left(\sum\limits_{j=-\infty}^{\infty}\| G_j
\|^2_{L^s_tL^q_x}\right)^{1/2},& \quad
2\leq q<\infty,\, 1\leq s \leq \infty, \\
\intertext{and}
\left(\sum\limits_{j=-\infty}^{\infty}\| G_j\|^2_{L^r_tL^p_x}\right)^{1/2}
\leq C\| G\|_{L^r_tL^p_x},& \quad 1<p\leq2,\, 1\leq r \leq \infty.
\end{align*}
\end{lemma}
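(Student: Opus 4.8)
The plan is to deduce both inequalities from the classical Littlewood--Paley square-function theorem on $\R^n$, applied for each fixed time $t$, followed by Minkowski's inequality in the mixed-norm spaces. First, I would recall that for $1<q<\infty$ there is a constant $C_q$ so that for every $h\in L^q(\R^n)$, setting $h_j=(2\pi)^{-n}\int_{\R^n}e^{ix\cdot\xi}\beta(2^{-j}|\xi|)\hat h(\xi)\,d\xi$, one has
\[
C_q^{-1}\,\|h\|_{L^q(\R^n)}\le\Bigl\|\Bigl(\sum_{j=-\infty}^\infty|h_j|^2\Bigr)^{1/2}\Bigr\|_{L^q(\R^n)}\le C_q\,\|h\|_{L^q(\R^n)}.
\]
This is obtained by applying the Mikhlin--H\"ormander multiplier theorem to the randomized symbols $\xi\mapsto\sum_j\varepsilon_j\beta(2^{-j}|\xi|)$ (with $\varepsilon_j=\pm1$) and then invoking Khintchine's inequality; see, for instance, Stein's book. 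Applying this with $h=G(t,\cdot)$ gives, pointwise in $t$, the bound $\|G(t,\cdot)\|_{L^q_x}\le C\,\|(\sum_j|G_j(t,\cdot)|^2)^{1/2}\|_{L^q_x}$ for $2\le q<\infty$ and, in the other direction, for $1<p\le2$, the bound $\|(\sum_j|G_j(t,\cdot)|^2)^{1/2}\|_{L^p_x}\le C\,\|G(t,\cdot)\|_{L^p_x}$.

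Second, I would trade the vector-valued $L^q_x(\ell^2_j)$ norm for the iterated norm. When $q\ge2$, the triangle inequality in $L^{q/2}_x$ applied to the nonnegative functions $|G_j(t,\cdot)|^2$ yields $\|(\sum_j|G_j(t,\cdot)|^2)^{1/2}\|_{L^q_x}\le(\sum_j\|G_j(t,\cdot)\|_{L^q_x}^2)^{1/2}$; in the other direction, when $p\le2$ one gets $(\sum_j\|G_j(t,\cdot)\|_{L^p_x}^2)^{1/2}\le\|(\sum_j|G_j(t,\cdot)|^2)^{1/2}\|_{L^p_x}$. Taking then the $L^s_t$ (respectively $L^r_t$) norm in $t$ and using Minkowski's inequality once more --- now in the time variable --- to interchange the $\ell^2_j$-summation with the $t$-integration, I would arrive at the two claimed estimates. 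One may also observe that the second inequality is formally the dual of the first, so it could alternatively be obtained from the first by a duality argument.

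I do not expect a genuine obstacle here: the single substantive ingredient is the $\R^n$ Littlewood--Paley inequality, which is classical, while the remaining steps are routine manipulations with Minkowski's inequality. The only point requiring (entirely routine) care is to keep track of which orderings of Lebesgue exponents permit Minkowski's inequality to be applied in the stated direction when the $\ell^2_j$-norm is moved inside or outside the space and time integrations.
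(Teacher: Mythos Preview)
The paper does not give its own proof of this lemma; it merely cites it as ``a minor variant of \cite[Lemma~3.8]{Ls}.'' So there is no argument in the paper to compare against, and your outline is precisely the standard route one would expect.

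Your argument is complete and correct in the ranges $s\ge 2$ (first inequality) and $r\le 2$ (second inequality): for fixed $t$ the spatial Littlewood--Paley theorem plus the $\ell^2\hookrightarrow L^{q/2}$ (resp.\ $L^{p/2}\hookrightarrow\ell^1$) Minkowski step gives the $x$-estimate, and then the $t$-Minkowski step $\|a\|_{L^s_t\ell^2_j}\le\|a\|_{\ell^2_jL^s_t}$ is valid exactly when $s\ge 2$ (and the reverse for $r\le 2$). These are, in fact, the only ranges used anywhere in the paper: in Sections~3--4 one always has the Strichartz exponents $q\ge 2$ on the solution side and $\tilde q',\tilde r'\in[1,2]$ on the forcing side.

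Where your write-up is too optimistic is the remark that the remaining ranges require only ``routine care.'' They do not: for $1\le s<2$ the first inequality is simply \emph{false}, and likewise the second for $r>2$. A counterexample is $G(t,x)=\sum_{j=1}^{N}\mathbf 1_{I_j}(t)\psi_j(x)$ with pairwise disjoint unit intervals $I_j$ and bumps $\psi_j$ Fourier-supported at scale $2^j$ with $\|\psi_j\|_{L^q_x}=1$. Then $\|G\|_{L^s_tL^q_x}=N^{1/s}$, while each $G_k$ has $\|G_k\|_{L^s_tL^q_x}\lesssim 1$, so the right-hand side of the first inequality is $\approx N^{1/2}$; the ratio blows up as $N\to\infty$ when $s<2$. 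The analogous construction defeats the second inequality when $r>2$. So the stated ranges $1\le s\le\infty$, $1\le r\le\infty$ are a slip in the lemma's formulation, not a gap in your proof; your Minkowski obstruction is exactly the obstruction to the statement itself.
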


\begin{lemma}\label{lem3.2}
Suppose that $1\leq p<q\leq\infty$. Let $T: L^p(\R)\rightarrow
L^q(\R)$ be a bounded linear operator which is defined by
\[
Tf(x)=\int_{\R}K(x,y)f(y)dy,
\]
where $K(x,y)$ is locally integrable. Define
\[
\tilde{T}f(x)=\int_{-\infty}^x K(x,y)f(y)dy.
\]
Then
\[
\|\tilde{T}f\|_{L^q}\leq C_{p,q}\,\| T\|_{L^p\to L^q}\,\| f\|_{L^p}.
\]
\end{lemma}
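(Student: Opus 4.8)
This is the Christ--Kiselev lemma; the plan is to prove the estimate for $f$ in a dense subclass of $L^p(\R)$ (e.g.\ bounded and compactly supported, so that all integrals converge absolutely) and then extend $\tilde T$ by continuity using the resulting bound. After normalizing $\|T\|_{L^p\to L^q}=1$ and $\|f\|_{L^p}=1$, I would introduce the distribution function $F(y)=\int_{-\infty}^y|f(t)|^p\,dt$, which is continuous and nondecreasing with $F(-\infty)=0$ and $F(+\infty)=1$. For a dyadic subinterval $I\subseteq[0,1]$ set $\Omega_I=\{y\in\R:F(y)\in I\}$, a closed interval; the key facts are $\int_{\Omega_I}|f|^p=|I|$, hence $\|f\mathbf 1_{\Omega_I}\|_{L^p}=|I|^{1/p}$, and that for fixed $n$ the intervals $\Omega_I$ with $|I|=2^{-n}$ are pairwise disjoint up to their endpoints.

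The geometric input is the dyadic tiling of the region below the diagonal: up to a null set, $\{(u,v):0<v<u<1\}$ is the disjoint union over $n\ge1$ and $1\le j\le 2^{n-1}$ of the squares $S_{n,j}=[\tfrac{2j-1}{2^n},\tfrac{2j}{2^n}]\times[\tfrac{2j-2}{2^n},\tfrac{2j-1}{2^n}]$ — one removes the square $[\tfrac12,1]\times[0,\tfrac12]$, which leaves two half-scale copies of the triangle in $[0,\tfrac12]^2$ and $[\tfrac12,1]^2$, and iterates. Pulling this back by $(x,y)\mapsto(F(x),F(y))$ should yield, for $f$ in the dense class, the pointwise identity
\[
\tilde T f(x)=\sum_{n\ge1}\sum_{j=1}^{2^{n-1}}\mathbf 1_{\Omega_1^{n,j}}(x)\,T\!\big(f\,\mathbf 1_{\Omega_2^{n,j}}\big)(x)\qquad\text{for a.e.\ }x,
\]
where $\Omega_1^{n,j}=F^{-1}\!\big([\tfrac{2j-1}{2^n},\tfrac{2j}{2^n}]\big)$ and $\Omega_2^{n,j}=F^{-1}\!\big([\tfrac{2j-2}{2^n},\tfrac{2j-1}{2^n}]\big)$.

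Granting this, the estimate is immediate. Fix $n$; the sets $\{\Omega_1^{n,j}\}_j$ are essentially disjoint and $\|f\mathbf 1_{\Omega_2^{n,j}}\|_{L^p}=2^{-n/p}$, so using $\|\mathbf 1_{\Omega_1^{n,j}}T(f\mathbf 1_{\Omega_2^{n,j}})\|_{L^q}\le\|T(f\mathbf 1_{\Omega_2^{n,j}})\|_{L^q}\le 2^{-n/p}$ one gets, for $q<\infty$,
\[
\Big\|\sum_{j}\mathbf 1_{\Omega_1^{n,j}}\,T\big(f\mathbf 1_{\Omega_2^{n,j}}\big)\Big\|_{L^q}
=\Big(\sum_{j}\big\|\mathbf 1_{\Omega_1^{n,j}}T(f\mathbf 1_{\Omega_2^{n,j}})\big\|_{L^q}^q\Big)^{1/q}
\le\big(2^{n-1}\,2^{-nq/p}\big)^{1/q}=2^{-1/q}\,2^{-n(1/p-1/q)},
\]
and for $q=\infty$ the $\ell^q$-sum is replaced by a supremum with the same bound $2^{-n/p}$. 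Since $p<q$ the exponent $1/p-1/q$ is positive, so summing over $n\ge1$ gives $\|\tilde Tf\|_{L^q}\le C_{p,q}$; undoing the normalization and invoking density yields $\|\tilde Tf\|_{L^q}\le C_{p,q}\|T\|_{L^p\to L^q}\|f\|_{L^p}$.

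I expect the one genuinely delicate point to be the justification of the displayed identity for $\tilde Tf$, i.e.\ that $\{y<x\}$ and the union of the pulled-back rectangles $\Omega_1^{n,j}\times\Omega_2^{n,j}$ agree modulo a set on which $\int\mathbf 1_{(\cdot)}(x,y)K(x,y)f(y)\,dy$ vanishes. The subtlety is that $F$ need not be strictly increasing: on any interval where $f=0$ it is constant, so the sets $\Omega_I$ can overlap at endpoints and the equivalence $y<x\Leftrightarrow F(y)<F(x)$ may fail. Every such discrepancy, however, lies over $\{f=0\}$ and so does not affect $f\mathbf 1_{\Omega_2^{n,j}}$ or $\tilde Tf$; making this precise (for instance after the harmless reduction to $f$ that is a.e.\ nonzero on an interval containing its support, or by a small perturbation of $F$) is routine but should be carried out with some care.
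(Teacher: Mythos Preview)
Your proposal is correct and is precisely the Christ--Kiselev argument. Note, however, that the paper does not give its own proof of this lemma: it merely cites the result from \cite{Chr2} (Theorem~1.2 there), so there is nothing to compare against beyond observing that your sketch reproduces the original source's method.
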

To prove Theorem~\ref{thm:1.2}, we shall require to get certain Strichartz estimates in \(\R_+^{1+2}\) for 2-D linear
Tricomi operator. For
this purpose, we study the following linear Cauchy problem
\begin{equation}\label{equ:3.1}
\left\{ \enspace
\begin{aligned}
&\partial_t^2 u-t\triangle u=F(t,x), \quad (t,x)\in\R_+^{1+2},\\
&u(0,\cdot)=f(x),\quad \partial_tu(0,\cdot)=g(x).
\end{aligned}
\right.
\end{equation}
Note that the solution $u$ of \eqref{equ:3.1} can be written as
\[
u(t,x)=v(t,x)+w(t,x),
\]
where $v$ solves the homogeneous problem
\begin{equation}\label{equ:3.2}
\left\{ \enspace
\begin{aligned}
&\partial_t^2 v-t\triangle v=0, \quad (t,x)\in\R_+^{1+2},\\
&v(0,\cdot)=f(x),\quad \partial_tv(0,\cdot)=g(x),
\end{aligned}
\right.
\end{equation}
and $w$ solves the inhomogeneous problem with zero initial data
\begin{equation}\label{equ:3.3}
\left\{ \enspace
\begin{aligned}
&\partial_t^2 w-t\triangle w=F(t,x), \quad (t,x)\in\R_+^{1+2},\\
&w(0,\cdot)=0,\quad \partial_tw(0,\cdot)=0.
\end{aligned}
\right.
\end{equation}
Let $\dot{H}^s(\R^2)$ denote the homogeneous Sobolev space with norm
\[
\| f\|_{\dot{H}^s(\R^2)}=\left\| |D_x|^s f\right\|_{L^2(\R^2)},
\]
where
\[
|D_x|=\sqrt{-\Delta}.
\]
It follows from \cite{Yag2} that the solution $v$ of \eqref{equ:3.2} can be expressed as
\[v(t,x)=V_1(t, D_x)f(x)+V_2(t, D_x)g(x),\]
where the symbols $V_j(t, \xi)$ ($j=1,2$) of the Fourier integral operators $V_j(t, D_x)$  are
\begin{equation}\label{equ:3.6}
\begin{split}
V_1(t,|\xi|)=&\frac{\Gamma(\frac{1}{3})}{\Gamma(\frac{1}{6})}\biggl[e^{\frac{z}{2}}H_+\Big(\frac{1}{6},\frac{1}{3};z\Big) +e^{-\frac{z}{2}}H_-\Big(\frac{1}{6},\frac{1}{3};z\Big)\biggr]
\end{split}
\end{equation}
and
\begin{equation}
\begin{split}
V_2(t,|\xi|)=&\frac{\Gamma(\frac{5}{3})}{\Gamma(\frac{5}{6})}t\biggl[
e^{\frac{z}{2}}H_+\Big(\frac{5}{6},\frac{5}{3};z\Big)
+e^{-\frac{z}{2}}H_-\Big(\frac{5}{6},\frac{5}{3};z\Big)\biggr],
\end{split}
\label{equ:3.7}
\end{equation}
here $z=2i\phi(t)|\xi|$, $i=\sqrt{-1}$, and $H_{\pm}$ are smooth functions of the variable $z$.
By \cite{Tani}, one knows that for $\beta\in\mathbb{N}_0^n$,
\begin{align}
\big| \partial_\xi^\beta H_{+}(\alpha,\gamma;z)
\big|&\leq C(\phi(t)|\xi|)^{\alpha-\gamma}(1+|\xi|^2)^{-\frac{|\beta|}{2}}
\quad if \quad \phi(t)|\xi|\geq 1, \label{equ:3.8} \\
\big| \partial_\xi^\beta H_{-}(\alpha,\gamma;z)\big|&\leq C(\phi(t)|\xi|)^{-\alpha}(1+|\xi|^2)^{-\frac{|\beta|}{2}}
\quad if \quad \phi(t)|\xi|\geq 1. \label{equ:3.9}
\end{align}
We only estimate $V_1(t, D_x)f(x)$ since the estimation on $V_2(t, D_x)g(x)$ is similar. Indeed, up
to a factor of $t\,\phi(t)^{-\frac{5}{6}}=
C\phi(t)^{-\frac{1}{6}}$, the powers of $t$ appearing in $V_1(t, D_x)f(x)$
or $V_2(t, D_x)g(x)$ are the same.

Choose a cut-off function $\chi(s)\in C^{\infty}(\Bbb R)$
with $\chi(s)=
\left\{ \enspace
\begin{aligned}
1, \quad &s\geq2 \\
0, \quad &s\leq1
\end{aligned}
\right.$. Then
\begin{equation}
\begin{split}
V_1(t,|\xi|)\hat{f}(\xi)&=\chi(\phi(t)|\xi|)V_1(t,|\xi|)\hat{f}(\xi)+(1-\chi(\phi(t)|\xi|))V_1(t,|\xi|)\hat{f}(\xi) \\
&=:\hat{v}_1(t,\xi)+\hat{v}_2(t,\xi).
\end{split}
\label{equ:3.10}
\end{equation}
By \eqref{equ:3.6}, \eqref{equ:3.8} and \eqref{equ:3.9}, we derive that
\begin{equation}
{v}_1(t,x)=C\biggl(\int_{\mathbb{R}^n}e^{i(x\cdot\xi+\phi(t)|\xi|)}a_{11}(t,\xi)\hat{f}(\xi)\md\xi+
\int_{\mathbb{R}^n}e^{i(x\cdot\xi-\phi(t)|\xi|)}a_{12}(t,\xi)\hat{f}(\xi)\md\xi\biggr), \label{equ:3.11}
\end{equation}
where $C>0$ is a generic constant, and for $\beta\in\mathbb{N}_0^n$,
\begin{equation*}
\big| \partial_\xi^\beta a_{1l}(t,\xi)\big|\leq C_{l\beta}|\xi|^{-|\beta|}\big(1+\phi(t)|\xi|\big)^{-\frac{1}{6}},
\qquad l=1,2.
\end{equation*}
Next we analyze $v_2(t,x)$. It follows from \cite{Erd1} or \cite{Yag2} that
\begin{equation*}
V_1(t,|\xi|)=e^{-\frac{z}{2}}\Phi\Big(\frac{1}{6},\frac{1}{3};z\Big), 
\end{equation*}
where $\Phi$ is the confluent hypergeometric function which is analytic with respect to the variable
$z=2i\phi(t)|\xi|$. Then
\begin{equation*}
\Big|\partial_\xi\big\{\big(1-\chi(\phi(t)|\xi|)\big)V_1(t,|\xi|)\big\}\Big|\leq C(1+\phi(t)|\xi|)^{-\frac{1}{6}}|\xi|^{-1}.
\end{equation*}
Similarly, one has
\begin{equation*}
\Big|\partial_\xi^{\beta}\big\{\big(1-\chi(\phi(t)|\xi|)\big)V_1(t,|\xi|)\big\}\Big|\leq
C(1+\phi(t)|\xi|)^{-\frac{1}{6}}|\xi|^{-|\beta|}.
\end{equation*}
Thus we arrive at
\begin{equation}
v_2(t,x)=C\biggl(\int_{\mathbb{R}^n}e^{i(x\cdot\xi+\phi(t)|\xi|)}a_{21}(t,\xi)\hat{f}(\xi)\md\xi
+\int_{\mathbb{R}^n}e^{i(x\cdot\xi-\phi(t)|\xi|)}a_{22}(t,\xi)\hat{f}(\xi)\md\xi\biggr), \label{equ:3.12}
\end{equation}
where, for $\beta\in\mathbb{N}_0^n$,
\begin{equation*}
\big| \partial_\xi^\beta a_{2l}(t,\xi)\big|\leq C_{l\beta}\big(1+\phi(t)|\xi|\big)^{-\frac{1}{6}}|\xi|^{-|\beta|},
\qquad l=1,2.
\end{equation*}
Substituting \eqref{equ:3.11} and \eqref{equ:3.12} into \eqref{equ:3.10} yields
\[V_1(t, D_x)f(x)=C\biggl(\int_{\mathbb{R}^n}e^{i(x\cdot\xi+\phi(t)|\xi|)}a_1(t,\xi)\hat{f}(\xi)\md\xi
+\int_{\mathbb{R}^n}e^{i(x\cdot\xi-\phi(t)|\xi|)}a_2(t,\xi)\hat{f}(\xi)\md\xi\biggr),\]
where $a_l$ $(l=1,2)$ satisfies
\begin{equation}
|\partial_\xi^\beta a_l(t,\xi)\big|\leq C_{l\beta}\big(1+\phi(t)|\xi|\big)^{-\frac{1}{6}}|\xi|^{-|\beta|}. \label{equ:3.13}
\end{equation}
Next we only treat the integral
$\int_{\R^n}e^{i\left(x\cdot\xi-\phi(t)|\xi|\right)}
a_2(t,\xi)\hat{f}(\xi)\,d\xi$ since the treatment of the
integral \linebreak
$\int_{\R^n}e^{i\left(x\cdot\xi+\phi(t)|\xi|\right)} a_1(t,\xi)
\hat{f}(\xi)\,d\xi$ is similar. Denote
\begin{equation}\label{equ:3.14}
(Af)(t,x)=\int_{\R^n}e^{i\left(x\cdot\xi-\phi(t)|\xi|\right)}a_2(t,\xi)
  \hat{f}(\xi)\,d\xi.
\end{equation}
We will show that
\begin{equation}\label{equ:3.15}
  \|(Af)(t,x)\|_{L^q_tL^r_{|x|}L_\theta^2(\R_+^{1+2})}\le C\,\| f\|_{\dot{H}^s(\R^2)},
\end{equation}
where $q\ge 2$ and $r\ge 2$ are some suitable constants related to $s$.
One obtains by a scaling argument that those indices in \eqref{equ:3.15}  should satisfy
\begin{equation}\label{equ:3.4}
  \frac{1}{q}+\frac{3}{r}
  =\frac{3}{2}\left(1-s\right).
\end{equation}
On the other hand, by another scaling argument similar to Knapp's counter example, we get the second restriction on the indices
in \eqref{equ:3.15}
\begin{equation}\label{equ:3.5}
  \f{1}{q}\leq 1-\frac{3}{2}\cdot\frac{1}{r}.
\end{equation}
In fact, for small \(\delta>0\), set \(\xi=(\xi_1, \xi_2)\in\R^2\backslash\{0\}\) and denote
\begin{equation*}
D=D_\delta=\{\xi\in\R^2: |\xi_1-1|<1/2, |\xi_2|<\delta\}.
\end{equation*}
Let \(\hat{f}(\xi)=\chi_D(\xi)\) be the characteristic function of domain \(D\). Note that on domain \(D\) it holds
\[|\xi|-\xi_1=\frac{|\xi|^2-\xi_1^2}{|\xi|+\xi_1}=\frac{|\xi_2|^2}{|\xi|+\xi_1}\sim\delta^2.\]
By \eqref{equ:3.14}, one has
\begin{equation}\label{equ:3.16}
\begin{split}
(Af)(t,x)&=\int_{\R^2}e^{i\left(x\cdot\xi-\phi(t)|\xi|\right)}a_2(t,\xi)\hat{f}(\xi)\ \md\xi \\
&=e^{i(x_1-\phi(t))}\int_{D}e^{i\left(-\phi(t)(|\xi|-\xi_1)+(x_1-\phi(t))(\xi_1-1)+x_2\xi_2\right)}a_2(t,\xi)\ \md\xi.
\end{split}
\end{equation}
Choose a domain $R$ in \(\mathbb{R}_+\times\R^2\) as
\begin{equation*}
  R=\{(t,x): \phi(t)\leq\delta^{-1}, |x_1-\phi(t)|\lesssim1, |x_2|\lesssim\delta^{-1}\}.
\end{equation*}
For \((t,x)\in R\) and \(\xi\in D\), then the phase function in \eqref{equ:3.16} is essentially equivalent to
a constant and we have
\[|(Af)(t,x)|\geq|D|(1+\delta^{-1})^{-\f{1}{6}}\sim|D|\delta^{\f{1}{6}}.\]
Therefore if we take \(s=0\) in \eqref{equ:3.15}, then a direct computation yields
\[\f{\|(Af)(t,x)\|_{L^q_tL^r_{|x|}L_\theta^2(\mathbb{R}_+\times\R^2)}}{\|f\|_{L^2(\R^2)}}
\geq\f{|D|\delta^{\f{1}{6}}\|\chi_R\|_{L^q_tL^r_{|x|}L_\theta^2(\mathbb{R}_+\times\R^2)}}{|D|^{\f{1}{2}}}
\sim\delta^{\frac{2}{3}-\frac{2}{3q}-\frac{1}{r}}.\]
Since \(\delta>0\) is small, in order to get \eqref{equ:3.15}, we shall need
\[\frac{2}{3}-\frac{2}{3q}-\frac{1}{r}\geq0\Longleftrightarrow\f{1}{q}\leq 1-\frac{3}{2}\cdot\frac{1}{r},\]
which gives restriction \eqref{equ:3.5}.
Now our task is to prove
\begin{lemma}\label{pro:3.1}
Let operator \(A\) be defined by \eqref{equ:3.14}. Assume that \((q,r)\neq(\infty,\infty)\),
\begin{equation*}
q, r\geq2\quad and \quad \frac{1}{q}\leq 1-\frac{3}{2}\cdot\frac{1}{r}.
\end{equation*}
Then
\begin{equation}\label{equ:3.17}
  \|(Af)(t,x)\|_{L^q_tL^r_{|x|}L_\theta^2(\R_+^{1+2})}\le C\,\| f\|_{\dot{H}^s(\R^2)},
\end{equation}
where $s=2(\f{1}{2}-\f{1}{r})-\f{2}{3}\cdot\f{1}{q}.$
\end{lemma}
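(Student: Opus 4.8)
To prove Lemma~\ref{pro:3.1} I would follow the scheme used for the $2$-D linear wave operator in \cite{Smi}: a Littlewood--Paley and scaling reduction to a single frequency block, then a time-dyadic decomposition that removes the degeneracy and turns $A$ into a piece of the $2$-D half-wave operator carrying extra decay, and finally a decomposition into thin angular sectors in which the $L^2_\theta$ norm supplies almost-orthogonality. Since the exponent $s$ and the region $\{q,r\ge2,\ \frac1q\le1-\frac3{2r}\}$ are exactly what the scaling relation \eqref{equ:3.4} and the Knapp example \eqref{equ:3.5} force, the argument must be carried out sharply.

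For $2\le r<\infty$ I would first write $f=\sum_{j\in\Z}f_j$ with $\operatorname{supp}\widehat{f_j}\subset\{|\xi|\sim2^j\}$; since the frequency cut-offs are radial Mikhlin multipliers (bounded on $L^r_{|x|}L^2_\theta$ by a spherical-harmonic expansion and vector-valued singular-integral estimates) and commute with $A$, Lemma~\ref{lem3.1} reduces \eqref{equ:3.17} to the square-function bound $\bigl(\sum_j\|Af_j\|_{L^q_tL^r_{|x|}L^2_\theta}^2\bigr)^{1/2}\lesssim\|f\|_{\dot H^s}$. Then the anisotropic dilation $f\mapsto f_\lambda$ with $\widehat{f_\lambda}(\xi)=\hat f(\xi/\lambda)$, accompanied by $t\mapsto\lambda^{2/3}t$ and $x\mapsto\lambda x$ --- under which, because $\phi(t)=\tfrac23t^{3/2}$, the phase $x\cdot\xi-\phi(t)|\xi|$ and the quantity $\phi(t)|\xi|$ are invariant and the symbol bounds \eqref{equ:3.13} are preserved --- reduces everything to the single-block estimate
\[
\|Af\|_{L^q_tL^r_{|x|}L^2_\theta(\R_+^{1+2})}\lesssim\|f\|_{L^2(\R^2)},\qquad \operatorname{supp}\hat f\subset\{|\xi|\sim1\},
\]
whereupon $\ell^2$-summation over $j$ restores the $\dot H^s$ norm, the dilation weight being exactly $2^{js}$ for the $s$ of the lemma. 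The endpoint $r=\infty$ is not covered by this reduction (there is no Littlewood--Paley square function in $L^\infty_{|x|}$) and I would treat it separately and directly, using the Bessel-function asymptotics of the kernel in the angular variable --- this is the case in which the $L^2_\theta$ norm is genuinely indispensable.

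For the single-block estimate I would split $t$ into $\{\phi(t)\le1\}$, where the symbol is of H\"ormander class uniformly and the contribution is controlled by an elementary fixed-time bound over a bounded interval, and dyadic pieces $A^{(k)}$ supported in $\{\phi(t)\sim2^k\}$, $k\ge1$. On each piece the substitution $\tau=\phi(t)$ (so $dt=c\,\tau^{-1/3}d\tau$, a weight $\sim2^{-k/3}d\tau$ on the block) turns $A^{(k)}$ into a frequency-$\sim1$ piece of the $2$-D half-wave Fourier integral operator $\int e^{i(x\cdot\xi-\tau|\xi|)}b(\tau,\xi)\hat f(\xi)\,d\xi$, now with the gained symbol decay $|\partial_\xi^\beta b|\lesssim2^{-k/6}|\xi|^{-|\beta|}$ for $\tau\sim2^k$. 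There I would decompose the cone direction $\xi/|\xi|\in S^1$ into $\sim2^{k/2}$ arcs of length $2^{-k/2}$ --- the scale on which $\tau|\xi|$ is affine up to $O(1)$ when $\tau\sim2^k$ --- and write $f=\sum_\nu f_\nu$ accordingly. Because the $A^{(k)}f_\nu$ have essentially disjoint angular localizations in $x$, the inner $L^2_\theta$ norm is comparable to $\bigl(\sum_\nu\|\cdot\|_{L^2_\theta}^2\bigr)^{1/2}$, and Minkowski's inequality (legitimate as $q,r\ge2$) gives $\|A^{(k)}f\|_{L^q_tL^r_{|x|}L^2_\theta}^2\lesssim\sum_\nu\|A^{(k)}f_\nu\|_{L^q_tL^r_{|x|}L^2_\theta}^2$. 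For a single sector, rotated to point along $e_1$, the kernel of $A^{(k)}_\nu$ concentrates near the rescaled light cone in a slab of transverse width $\sim2^{-k/2}$; this thin localization improves the fixed-$\tau$ $L^{r'}\to L^r$ mapping norm of $A^{(k)}_\nu(\tau)\,A^{(k)}_\nu(\sigma)^*$ over that of the undecomposed operator, and a $TT^*$ argument followed by Young's inequality in $\tau$ over the block carries out the $\tau$-integration (Christ--Kiselev-type causal truncations, should they arise, being handled by Lemma~\ref{lem3.2}). Summing the squares over the $\sim2^{k/2}$ sectors and collecting every power of $2$ --- from the sector count and width, the block length $2^k$, the symbol gain $2^{-k/6}$ and the Jacobian $2^{-k/(3q)}$ --- one verifies a net bound $\lesssim2^{-\delta k}\|f\|_{L^2}$ with $\delta=\delta(q,r)>0$ exactly under $q,r\ge2$, $\frac1q\le1-\frac3{2r}$, $(q,r)\ne(\infty,\infty)$, and summation of the geometric series in $k$ concludes.

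The main difficulty is the sector analysis: because of the degeneracy and the genuine $(t,\xi)$-dependence of the symbol $a_2$, the operator $A^{(k)}_\nu$ is not the constant-coefficient half-wave Fourier integral operator, so a careful stationary-phase and van der Corput study of its $(t,\xi)$-dependent kernel is needed to confirm that the sector count, the sector width, the dispersive decay and the symbol gain balance to a summable bound with the sharp exponents. Pinning down the borderline cases --- $r=\infty$ on one side, and the neighbourhood of the excluded endpoint $(q,r)=(\infty,\infty)$, where a logarithmic divergence must be kept at bay, on the other --- is the most delicate point. A secondary chore is justifying the Littlewood--Paley and angular-orthogonality reductions in $L^q_tL^r_{|x|}L^2_\theta$, which rest on a spherical-harmonic expansion together with vector-valued Calder\'on--Zygmund theory.
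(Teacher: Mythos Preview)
Your proposal takes a genuinely different route from the paper. After the common Littlewood--Paley/scaling reduction to a single frequency block (the paper does this too, via Lemma~\ref{lem3.1}), the paper does \emph{not} perform a time-dyadic or angular-sector decomposition with $TT^*$. Instead it proves the single-block estimate \eqref{equ:3.18} by interpolation between two endpoints: the line $q=\infty$, handled in a few lines by Sobolev embedding and the symbol bound \eqref{equ:3.13}, and the line $r=\infty$ (any $2\le q<\infty$), which is the heart of the argument. For the latter the paper expands $\hat f$ in angular Fourier series, invokes the Bessel-function representation of the inverse Fourier transform, and reduces matters to pointwise bounds on $\int_0^{2\pi}|\hat\alpha_t(b-r\cos\theta)|\,d\theta$ (Lemma~\ref{lem:3.3}) together with a weighted $L^2_s$ estimate (the Claim following it); the time integration is then closed by a direct computation using the factor $(1+\phi(t))^{-1/6}$. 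So what you flag as a side case --- ``the Bessel-function asymptotics of the kernel in the angular variable'' at $r=\infty$ --- is in fact the paper's entire engine; once that endpoint and the trivial $q=\infty$ endpoint are in hand, complex interpolation gives the full range.

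Your sector/$TT^*$ scheme is a reasonable alternative, closer in spirit to Knapp/wave-packet arguments than to \cite{Smi}. Its potential advantage is conceptual uniformity across $(q,r)$; the paper's advantage is that it avoids any delicate angular almost-orthogonality bookkeeping and handles the $t$-dependence of the symbol in one stroke via Lemma~\ref{lem:3.3}. The point in your sketch that would need real work is the claim that the $A^{(k)}f_\nu$ are almost-orthogonal in $L^2_\theta$ uniformly in $(t,r)$: near the light cone $r\approx\phi(t)\sim2^k$ the tubes have $\theta$-width $\sim2^{-k/2}$ and are merely adjacent, so one needs sharp tail control away from the main arc; away from the cone the localization is different and must be dealt with separately. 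The power-counting you describe is consistent with the sharp condition $\frac1q\le1-\frac3{2r}$, but carrying it out rigorously with the $(t,\xi)$-dependent symbol $a_2$ is nontrivial, as you yourself note. In short: your plan is plausible but heavier than needed; the paper's interpolation-to-$r=\infty$ strategy, which you mention only as a fallback, is in fact the cleanest path.
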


\begin{proof}
The main step in the proof of \eqref{equ:3.17} is to show that
\begin{equation}\label{equ:3.18}
\begin{split}
  \|(Af)(t,x)\|&_{L^q_tL^r_{|x|}L_\theta^2(\R_+^{1+2})}\le C\,\| f\|_{L^2(\R^2)} \\
  &\text{if} \quad 2\leq q<\infty, 2\leq r\leq\infty \quad\text{and} \quad \hat{f}(\xi)=0 \quad \text{if} \quad |\xi|\notin[\f{1}{2}, 1].
\end{split}
\end{equation}
Indeed, once \eqref{equ:3.18} is proved, then by the support condition of \(f\), we know that
\begin{equation}\label{equ:3.19}
\|(Af)(t,x)\|_{L^q_tL^r_{|x|}L_\theta^2(\R_+^{1+2})}\le C\,\| f\|_{\dot{H}^s(\R^2)}, \quad
s=2(\f{1}{2}-\f{1}{r})-\f{2}{3}\cdot\f{1}{q}.
\end{equation}
This together with Lemma~\ref{lem3.1} yields \eqref{equ:3.17}.

To prove \eqref{equ:3.18}, we follow some ideas of \cite{Smi} and use the interpolation method.
The first case is \(q=\infty\) and \(s=1-\f{2}{r}\). Since Hardy-Littlewood-Sobolev estimate gives \(\dot{H}^{1-\f{2}{r}}(\R^2)\subseteq L^r(\R^2)\)
for \(2\leq r<\infty\), we clearly have
\begin{equation*}
   \|Af\|_{L^\infty_tL^r_{|x|}L_\theta^2(\R_+^{1+2})}\le C\,\| Af\|_{L^\infty_tL^r_{|x|}L_\theta^r(\R_+^{1+2})}
   \leq C\ \|Af\|_{L^\infty_t\dot{H}^{1-\f{2}{r}}(\R_+^{1+2})}.
\end{equation*}
It follows from \eqref{equ:3.13} and \eqref{equ:3.14} that if $\hat{f}(\xi)=0$ for $|\xi|\notin[\f{1}{2}, 1]$,
\begin{align*}
  &\|Af\|_{L^\infty_t\dot{H}^{1-\f{2}{r}}(\mathbb{R}_+\times\R^2)} \\
  &\leq C\ \bigg\|\Big\|\int_{\R^2}e^{i\left(x\cdot\xi+\phi(t)|\xi|\right)}a_1(t,\xi)
  \hat{f}(\xi)\,d\xi\Big\|_{\dot{H}^{1-\f{2}{r}}(
  \R^2)}\bigg\|_{L^\infty_t(\R)} \\
  &\leq C\ \bigg\|\Big\||\xi|^{1-\f{2}{r}}\big(1+\phi(t)|\xi|\big)^{-\f{1}{6}}|\hat{f}(\xi)|\Big\|_{L^2(
  \R^2)}\bigg\|_{L^\infty_t(\R)} \\
  &\leq C\ \|f\|_{\dot{H}^{1-\f{2}{r}}(\R^2)}\leq C\ \|f\|_{L^2(\R^2)}.
\end{align*}
By interpolation, if we can conclude that for \(2\leq q<\infty\),
\begin{equation}\label{equ:3.20}
\|Af\|_{L^q_tL^\infty_{|x|}L_\theta^2(\mathbb{R}_+\times\R^2)}\le C\,\| f\|_{L^2(\R^2)} \quad
\text{for $\hat{f}(\xi)=0$ for $|\xi|\notin[\f{1}{2}, 1]$},
\end{equation}
then \eqref{equ:3.18} is immediately proved.
Next we turn to the proof of \eqref{equ:3.20}. By the support condition for \(\hat{f}\), we arrive at
\begin{equation}\label{equ:3.21}
  \|f\|_{L^2(\R^2)}^2\approx\int_{0}^{\infty}\int_{0}^{2\pi}|\hat{f}(\rho\cos\omega, \rho\sin\omega)|^2\ \md\omega\md\rho,
\end{equation}
here \(\xi=(\rho\cos\omega, \rho\sin\omega)\).
Expanding the angular part of \(\hat{f}\) by Fourier series yields that there are coefficients \(c_k(\rho)\) 
$(k\in\Bbb Z)$
vanishing for \(\rho\notin[\f{1}{2}, 1]\) such that
\[\hat{f}(\xi)=\sum_kc_k(\rho)e^{ik\omega}.\]
This means
\begin{equation}\label{equ:3.24}
  (Af)(t,\xi)=e^{-i\phi(t)\rho}a_2(t,\rho)\sum_kc_k(\rho)e^{ik\omega}.
\end{equation}
By Plancherel's theorem for \(\mathbb{S}^1\) and \(\mathbb{R}\), we have
\begin{equation}\label{equ:3.22}
  \|f\|_{L^2(\R^2)}^2\approx\sum_k\int_{\R}|c_k(\rho)|^2\ \md\rho\approx\sum_k\int_{\R}|\hat{c}_k(s)|^2\ \md s,
\end{equation}
where \(\hat{c}_k(s)\) is the one-dimensional Fourier transform of \(c_k(\rho)\). Recall that (see \cite{Ste}, p.137)
\begin{equation}\label{equ:3.23}
  f\big(r(\cos\omega, \sin\omega)\big)=(2\pi)^{-1}\sum_k\Big(i^k\int_{0}^{\infty}J_k(r\rho)c_k(\rho)\rho\ \md \rho\Big)e^{ik\omega},
\end{equation}
where  \(k\in\mathbb{Z}\), and \(J_k\) is the \(k\)-th Bessel function defined by
\[J_k(y)=\frac{(-i)^k}{2\pi}\int_{0}^{2\pi}e^{iy\cos\theta-ik\theta}\ \md\theta.\]
Choose a cut-off function \(\beta\in C_0^\infty(\mathbb{R})\) such that
\begin{equation*}
  \beta(\tau)=
  \left\{ \enspace
  \begin{aligned}
  &1, && \frac{1}{2}\leq\tau\leq1,\\
  &0, && \tau\notin[\f{1}{4}, 2].
  \end{aligned}
  \right.
\end{equation*}
Let \(\alpha(t,\rho)=\rho\beta(\rho)a_2(t,\rho)\). Then by \eqref{equ:3.23} and the support condition of \(c_k\), we have
\begin{multline*}
  (Af)\big(t, r(\cos\omega, \sin\omega)\big) \\
  \begin{aligned}
  &=(2\pi)^{-1}\sum_k\bigg(i^k\int_{0}^{\infty}J_k(r\rho)e^{-i\phi(t)\rho}c_k(\rho)\beta(\rho)a_2(t,\rho)\rho\ \md\rho\bigg)e^{ik\omega} \\
  &=(2\pi)^{-2}\sum_k\bigg(i^k\int_{0}^{\infty}\int_{-\infty}^{\infty}J_k(r\rho)e^{i\rho\big(s-\phi(t)\big)}\hat{c}_k(s)\alpha(t,\rho)\ \md s\ \md\rho\bigg)e^{ik\omega} \\
  &=(2\pi)^{-3}\sum_k\bigg(i^k\int_{0}^{\infty}\int_{-\infty}^{\infty}\int_{0}^{2\pi}e^{i\rho r\cos\theta-ik\theta}e^{i\rho\big(s-\phi(t)\big)}\hat{c}_k(s)\alpha(t,\rho)\ \md\theta\ \md s\ \md\rho\bigg)e^{ik\omega} \\
  &=(2\pi)^{-3}\sum_k\bigg(i^k\int_{-\infty}^{\infty}\int_{0}^{2\pi}e^{-ik\theta}\hat{\alpha}_t\big(\phi(t)-s-r\cos\theta\big)
  \hat{c}_k(s)\ \md\theta\ \md s\bigg)e^{ik\omega},\\
  \end{aligned}
\end{multline*}
where $\hat{\alpha}_t(\xi)$ stands for the Fourier transformation of $\al(t,\rho)$ with respect to the variable $\rho$.
Direct computation yields that for any \(r\geq0\),
\begin{align}\label{equ:3.25}
&\int_{0}^{2\pi}\big|(Af)\big(t, r(\cos\omega, \sin\omega)\big)\big|^2\md \omega \no\\
&=(2\pi)^{-5}\sum_k\Big|\int_{-\infty}^{\infty}\int_{0}^{2\pi}e^{-ik\theta}\hat{\alpha}_t\big(\phi(t)-s-r\cos\theta\big)
\hat{c}_k(s)\ \md\theta\ \md s\Big|^2.
\end{align}
To proceed further, we shall need a control of the integral in \eqref{equ:3.25} with respect to the variable \(\theta\),
which is similar to Lemma 2.1 in \cite{Smi}.
\begin{lemma}\label{lem:3.3}
  Let \(\hat{\alpha}(t,\xi)\) be defined as above and a number \(N\in\N\) be fixed. Then there is a uniform constant \(C>0\), which is 
  independent of the variables \(b\in\R\) and \(r\geq0\), so that the following inequalities hold:
  \begin{equation}\label{equ:3.26}
    \int_{0}^{2\pi}|\hat{\alpha}_t(b-r\cos\theta)|\ \md \theta\leq C\langle b\rangle^{-N}\big(1+\phi(t)\big)^{-\frac{1}{6}} \qquad \text{if} \quad 0\leq r\leq1 \quad \text{or} \quad |b|\geq2r;
  \end{equation}
    \begin{equation}\label{equ:3.27}
    \int_{0}^{2\pi}|\hat{\alpha}_t(b-r\cos\theta)|\ \md \theta\leq C\big(r^{-1}+r^{-\f{1}{2}}\langle r-|b|\rangle^{-\f{1}{2}}\big)\big(1+\phi(t)\big)^{-\frac{1}{6}}
    \qquad \text{if $r>1$ and $|b|\leq2r$}.
   \end{equation}
\end{lemma}
\begin{proof}[Proof of Lemma 3.4.]
By the definition of function $\hat{\alpha}_t$, we only need to study the integral
\[I=\int_{0}^{2\pi}\Big|\int_{0}^{\infty}e^{-i(b-r\cos\theta)}\rho\beta(\rho)a_2(t,\rho)\md \rho\Big|\ \md \theta.\]
\vskip 0.2 true cm

\textbf{Case I.} \(\mathbf{|b|\geq2r}\)
\vskip 0.2 true cm
In this case, we have
\begin{align*}
I=&\int_{0}^{2\pi}\Big|\int_{0}^{\infty}(b-r\cos\theta)^N e^{-i(b-r\cos\theta)}\rho\beta(\rho)a_2(t,\rho)\md \rho\Big|(b-r\cos\theta)^{-N}\ \md \theta \\
&=\int_{0}^{2\pi}\Big|\int_{0}^{\infty}(-D_\rho)^N\big(e^{-i(b-r\cos\theta)}\big)\rho\beta(\rho)a_2(t,\rho)\md \rho\Big|(b-r\cos\theta)^{-N}\ \md \theta  \\
&=\int_{0}^{2\pi}\Big|\int_{0}^{\infty}e^{-i(b-r\cos\theta)}(D_\rho)^N\big(\rho\beta(\rho)a_2(t,\rho)\big)\md \rho\Big|(b-r\cos\theta)^{-N}\ \md \theta.
\end{align*}
Since \(\rho\beta(\rho)\in\mathcal{S}(\R)\) and \(a_2\) satisfies \eqref{equ:3.13}, direct computation yields
\[I\leq\int_{0}^{2\pi}\big(1+\phi(t)\big)^{-\frac{1}{6}}\langle b\rangle^{-N}\ \md\theta
\leq C\big(1+\phi(t)\big)^{-\frac{1}{6}}\langle b\rangle^{-N},\]
which just corresponds to \eqref{equ:3.26}.

\vskip 0.2 true cm

\textbf{Case II.} \(\mathbf{0\leq r\leq1}\)
\vskip 0.2 true cm

For \(|b|>2\), it is reduced to Case I. For \(|b|\leq2\), by a direct computation, we have
\begin{align*}
|I|&\leq\int_{0}^{2\pi}\Big|\int_{1/4}^{2}\rho\beta(\rho)\big(1+\phi(t)\rho\big)^{-\frac{1}{6}}\ \md\rho\Big|\ \md\theta \\
&\leq C\big(1+\phi(t)\big)^{-\frac{1}{6}}\leq C\big(1+\phi(t)\big)^{-\frac{1}{6}}\langle b\rangle^{-N}.
\end{align*}

\vskip 0.2 true cm
\textbf{Case III.} \(\mathbf{r>1}\) \textbf{and} \(\mathbf{|b|\leq 2r}\)
\vskip 0.2 true cm
In this case, we intend to prove that
\begin{equation}\label{equ:3.28}
  \int_{0}^{\pi/4}|\hat{\alpha}_t(b-r\cos\theta)|\ \md \theta+\int_{3\pi/4}^{\pi}|\hat{\alpha}_t(b-r\cos\theta)|\ \md \theta\leq Cr^{-\f{1}{2}}\langle r-|b|\rangle^{-\f{1}{2}}\big(1+\phi(t)\big)^{-\frac{1}{6}},
\end{equation}
and
\begin{equation}\label{equ:3.29}
  \int_{\pi/4}^{3\pi/4}|\hat{\alpha}_t(b-r\cos\theta)|\ \md \theta\leq r^{-1}\big(1+\phi(t)\big)^{-\frac{1}{6}}.
\end{equation}

To show \eqref{equ:3.28}, it only suffices to estimate the first integral in \eqref{equ:3.28}. Let \(u=1-\cos\theta\), we then have
\begin{align}\label{B-0}
&\int_{0}^{\pi/4}|\hat{\alpha}_t(b-r\cos\theta)|\ \md \theta=\int_{0}^{1-\f{\sqrt{2}}{2}}|\hat{\alpha}_t(b-r+ru)|\ \f{\md u}{\sqrt{2u-u^2}}\no\\
&\leq C\int_{0}^{1-\f{\sqrt{2}}{2}}|\hat{\alpha}_t(b-r+ru)|\ \f{\md u}{\sqrt{u}}.
\end{align}
We further set \(\bar{u}=ru\), then the last integral in \eqref{B-0} can be controlled by
\begin{align}\label{B-0'}
&r^{-\f{1}{2}}\int_{0}^\infty|\hat{\alpha}_t(b-r+\bar{u})|\ \f{\md \bar u}{\sqrt{\bar u}}\no\\
&=r^{-\f{1}{2}}\int_{0}^\infty\Big|\int_{-\infty}^{\infty}e^{-i(b-r+\bar{u})}\rho\beta(\rho)a_2(t,\rho)\md \rho\Big|\ \f{\md \bar{u}}{\sqrt{\bar{u}}}
=:r^{-\f{1}{2}}II.
\end{align}
If \(\big|r-|b|\big|\geq2\), then
\begin{align}\label{B-1}
II=&\int_{0}^{\big|r-|b|\big|/2}\Big|\int_{-\infty}^{\infty}e^{-i(b-r+\bar{u})}\rho\beta(\rho)a_2(t,\rho)\md \rho\Big|\ \f{\md \bar{u}}{\sqrt{\bar{u}}}\no\\
&+\int_{\big|r-|b|\big|/2}^\infty\Big|\int_{-\infty}^{\infty}e^{-i(b-r+\bar{u})}\rho\beta(\rho)a_2(t,\rho)\md \rho\Big|\ \f{\md \bar{u}}{\sqrt{\bar{u}}}\no\\
&=:II_1+II_2.
\end{align}
For \(II_1\), we can repeat the analysis in Case I and integrate by parts to get
\begin{align}\label{B-2}
  II_1 & \leq C\int_{0}^{\big|r-|b|\big|/2}|b-r+\bar{u}|^{-N}\big(1+\phi(t)\big)^{-\frac{1}{6}}\f{\md \bar{u}}{\sqrt{\bar{u}}}\no \\
   & \qquad \leq C\big||b|-r\big|^{-N}\big(1+\phi(t)\big)^{-\frac{1}{6}}\int_{0}^{\big|r-|b|\big|/2}\f{\md \bar{u}}{\sqrt{\bar{u}}} \no \\
   & \qquad \leq C\langle |b|-r\rangle^{\f{1}{2}-N}\big(1+\phi(t)\big)^{-\frac{1}{6}}.
\end{align}
For \(II_2\), integrating by parts yields
\begin{align}\label{B-3}
  II_2 & \leq C\big(1+\phi(t)\big)^{-\frac{1}{6}}\ \int_{\big||b|-r\big|/2}^{\infty}\langle |b|-r+\bar{u}\rangle^{-N}\f{\md \bar{u}}{\sqrt{\bar{u}}}\no\\
    & \leq C\big(1+\phi(t)\big)^{-\frac{1}{6}}\ \big|r-|b|\big|^{-\f{1}{2}}\int_{\big||b|-r\big|/2}^{\infty}\langle |b|-r+\bar{u}\rangle^{-N}\ \md \bar{u} \no\\
    & \leq C\big(1+\phi(t)\big)^{-\frac{1}{6}}\ \big||b|-r\big|^{-\f{1}{2}}.
\end{align}
If \(\big|r-|b|\big|\leq2\), then by similar computation,
\begin{align}\label{B-4}
  &\int_{0}^{\infty}\big|\hat{\alpha}_t(b-r+\bar{u})\big|\f{\md \bar{u}}{\sqrt{\bar{u}}}
  \leq C\ \int_{0}^{\infty}\langle b-r+\bar{u}\rangle^{-N}\big(1+\phi(t)\big)^{-\frac{1}{6}}\f{\md \bar{u}}{\sqrt{\bar{u}}}\no\\
  &\leq C\ \big(1+\phi(t)\big)^{-\frac{1}{6}}\langle r-|b|\rangle^{-\frac{1}{2}}.
\end{align}
Thus it follows from \eqref{B-0}-\eqref{B-4} that the proof of \eqref{equ:3.28} is finished.

To show \eqref{equ:3.29}, we set \(u=r\cos\theta\). Then
\begin{align*}
  &\int_{\pi/4}^{3\pi/4}|\hat{\alpha}_t(b-r\cos\theta)|\ \md \theta=\int_{-\f{\sqrt{2}}{2}r}^{\f{\sqrt{2}}{2}r}\Big|\int_{-\infty}^{\infty}e^{-i\rho(b-u)}\alpha(t,\rho)\md \rho\Big|\f{\md u}{r\sin\theta} \\
  &\leq C\ r^{-1}\int_{-\f{\sqrt{2}}{2}r}^{\f{\sqrt{2}}{2}r}\big(1+\phi(t)\big)^{-\frac{1}{6}}\langle b-u\rangle^{-N}\md u \\
  &\leq C\ r^{-1}\int_{-\infty}^{\infty}\big(1+\phi(t)\big)^{-\frac{1}{6}}\langle u\rangle^{-N}\md u \\
  &\leq C\ \big(1+\phi(t)\big)^{-\frac{1}{6}}r^{-1}.
\end{align*}
Collecting all the analysis above in Case I-Case III yields the proof of Lemma~\ref{lem:3.3}.
\end{proof}

By Lemma~\ref{lem:3.3}, we have
\begin{claim}
For \(\delta>0\), there is a constant \(C_\delta>0\), which is independent of \(t\in \R_+\) and \(r\geq0\), such that
\begin{equation}\label{equ:3.30}
  \int_{-\infty}^{\infty}\bigg(\int_{0}^{2\pi}\big(1+\phi(t)\big)^{\frac{1}{6}}\langle\phi(t)-s\rangle^{\f{1}{2}-\delta}
  \big|\hat{\alpha}_t\big(\phi(t)-s-r\cos\theta\big)\big|\md\theta\bigg)^2\md s\leq C_\delta.
\end{equation}
\end{claim}

\begin{proof}[proof of claim]
If \(0\leq r\leq1\), or \(|\phi(t)-s|\geq2r\), then for \(\delta>0\), it is easy to see that \eqref{equ:3.26} yields the expected estimate
\eqref{equ:3.30}.

If \(|\phi(t)-s|\leq2r\) and \(r>1\), then by \eqref{equ:3.27}, one has
\begin{align}\label{B-5}
  &\int_{\phi(t)-2r}^{\phi(t)+r}\bigg(\int_{0}^{2\pi}\big(1+\phi(t)\big)^{-\frac{1}{6}}\langle\phi(t)-s\rangle^{\f{1}{2}-\delta}
  \big|\hat{\alpha}_t\big(\phi(t)-s-r\cos\theta\big)\big|\md\theta\bigg)^2\md s\no \\
  &\leq\int_{\phi(t)-2r}^{\phi(t)+r}\bigg(\big(1+\phi(t)\big)^{\frac{1}{6}}\langle\phi(t)-s\rangle^{\f{1}{2}-\delta}
  (\big(1+\phi(t)\big)^{-\frac{1}{6}}
  \big(r^{-1}+r^{-\f{1}{2}}\langle r-|\phi(t)-s|\rangle^{-\f{1}{2}}\big)\bigg)^2\md s\no \\
  &\leq 2\int_{\phi(t)-2r}^{\phi(t)+r}\big(r^{-1-2\delta}+\langle\phi(t)-s\rangle^{1-2\delta}r^{-1}\big\langle r-|\phi(t)-s|\big\rangle^{-1}\big)\md s.
  \end{align}
Next we treat the integral in \eqref{B-5}. By \(r>1\), we have
\[\int_{\phi(t)-2r}^{\phi(t)+2r}r^{-1-2\delta}ds=\frac{4}{r^{2\delta}}\leq C_\delta.\]
For the second part of the integral in \eqref{B-5}, if \(|\phi(t)-s|\leq1\) and  \(r>1\), then
\[\int_{\phi(t)-2r}^{\phi(t)+2r}\langle\phi(t)-s\rangle^{1-2\delta}r^{-1}\big\langle r-|\phi(t)-s|\big\rangle^{-1}\md s
\leq C\int_{\phi(t)-1}^{\phi(t)+1}r^{-1}\md s\leq \f{C}{r}\leq C_\delta.\]
If \(|\phi(t)-s|>1\) and  \(r>1\), we then set \(\eta=|\phi(t)-s|\) and derive that for \(0<\delta<1/2\), the integral in \eqref{B-5} can be controlled by
\begin{align}\label{equ:3.31}
&\bigg|\int_{\phi(t)-2r}^{\phi(t)+r}\langle\phi(t)-s\rangle^{1-2\delta}r^{-1}\big\langle r-|\phi(t)-s|\big\rangle^{-1}\md s\bigg|\no\\
&\leq C\int_{1}^{2r}r^{-1}\eta^{1-2\delta}\langle r-\eta\rangle^{-1}\md \eta\no\\
&\leq \f{C}{r}\int_{1}^{2r}\f{\eta^{1-2\delta}}{1+|r-\eta|}\md \eta \no\\
&\leq \f{C}{r}\Big(\int_{1}^{r}\f{\eta^{1-2\delta}}{1+r-\eta}\md \eta
+\int_{r}^{2r}\f{\eta^{1-2\delta}}{1+\eta-r}\md \eta\Big).
\end{align}
Note that
\begin{equation}\label{equ:3.32}
  \begin{split}
     \int_{1}^{r}\f{\eta^{1-2\delta}}{1+r-\eta}\md \eta & =\ln(2+r)+(1-2\delta)\int_{1}^{r}\ln(1+r-\eta)\eta^{-2\delta}\md \eta \\
       & \leq \ln(2+r)+(1-2\delta)\ln r\int_{1}^{r}\eta^{-2\delta}\md \eta \\
       & \leq \ln(2+r)+r^{1-2\delta}\ln r\\
  \end{split}
\end{equation}
and
\begin{align}\label{equ:3.33}
&\int_{r}^{2r}\f{\eta^{1-2\delta}}{1+\eta-r}\md \eta=\eta^{1-2\delta}\ln(1+\eta-r)\big|_r^{2r}-\int_{r}^{2r}\ln(1+\eta-r)\md \eta^{1-2\delta}\no\\
&\leq (2r)^{1-2\delta}\ln(1+r).
\end{align}
Then collecting \eqref{equ:3.31}, \eqref{equ:3.32} and \eqref{equ:3.33} yields
\begin{align*}
&\int_{1}^{2r}r^{-1}\eta^{1-2\delta}\langle r-\eta\rangle^{-1}\md \eta\leq \f{\ln(1+r)}{r^{2\delta}}\leq C_\delta.
\end{align*}
Hence the claim is proved.
\end{proof}

It follows from {\bf Claim}, \eqref{equ:3.25} and H\"{o}lder's inequality that
\[\|Af\|_{L_{|x|}^\infty L_\theta^2}^2\leq C_\delta\sum_k\int_{-\infty}^{\infty}\big|\big(1+\phi(t)\big)^{-\frac{1}{6}}\langle\phi(t)-s\rangle^{-\f{1}{2}+\delta}
\hat{c}_k(s)\big|^2\md s.\]
For any \(q\geq2\), we can choose a constant \(\delta>0\) such that
\begin{equation}\label{equ:3.34}
\sigma=:q\Big(1-\f{3}{2}\delta\Big)>1.
\end{equation}
Then by Minkowski's inequality, we have that for \(q\geq2\),
\begin{align}\label{B-6}
\|Af\|_{L_t^qL_{|x|}^\infty L_\theta^2}\leq \bigg(\sum_k\int_{-\infty}^{\infty}\bigg|\Big(\int_{0}^{\infty}\big|\big(1+\phi(t)\big)^{-\frac{1}{6}}
\langle\phi(t)-s\rangle^{-\f{1}{2}+\delta}\big|^q\md t\Big)^{\f{1}{q}}\hat{c}_k(s)\bigg|^2\md s\bigg)^{\f{1}{2}}.
\end{align}
To handle \eqref{B-6}, we require to compute
\[\int_{0}^{\infty}\big|\big(1+\phi(t)\big)^{-\frac{1}{6}}
\langle\phi(t)-s\rangle^{-\f{1}{2}+\delta}\big|^q\md t.\]
If \(s\leq0\), then by \eqref{equ:3.34}, we arrive at
\begin{equation}\label{equ:3.35}
  \int_{0}^{\infty}\big|\big(1+\phi(t)\big)^{-\frac{1}{6}}\langle\phi(t)\rangle^{-\f{1}{2}+\delta}\big|^q\md t
  \leq C\int_{0}^{\infty}(1+t)^{-\f{3q}{2}\big(\f{2}{3}-\delta\big)}\ \md t\le C.
\end{equation}
If \(s>0\), we then write \(s=\phi(\bar{s})\) and conclude
\begin{align}\label{equ:3.36}
&\int_{0}^{\infty}\big|\big(1+\phi(t)\big)^{-\frac{1}{6}}\langle\phi(t)-s\rangle^{-\f{1}{2}+\delta}\big|^q\md t \no\\
&=\int_{0}^{\infty}\big|\big(1+\phi(t)\big)^{-\frac{1}{6}}\langle\phi(t)-\phi(\bar{s})\rangle^{-\f{1}{2}+\delta}\big|^q\md t \no\\
&\leq C\int_{0}^{\infty}\big|(1+t)^{-\frac{1}{4}}\langle t-\bar{s}\rangle^{\f{3}{2}\big(-\f{1}{2}+\delta\big)}\big|^q\md t\no \\
&\leq C\int_{0}^{\infty}(1+|t-\bar{s}|)^{\f{3q}{2}\big(-\f{1}{2}+\delta\big)}(1+t)^{-\frac{q}{4}}\md t.
\end{align}
By \eqref{equ:3.34}, in order to estimate \eqref{equ:3.36}, we only need to compute  the following integral for \(\bar s>0\),
\[\int_{0}^{\infty}(1+|t-\bar s|)^{-\alpha}(1+t)^{-\beta}\md t, \quad \alpha+\beta>1.\]
A direct computation yields
\begin{align}\label{equ:3.37}
&\int_{0}^{\infty}(1+|t-\bar s|)^{-\alpha}(1+t)^{-\beta}\md t=\int_{0}^{\f{s}{2}}(1+|t-\bar s|)^{-\alpha}(1+t)^{-\beta}\md t
+\int_{\f{s}{2}}^\infty(1+|t-\bar s|)^{-\alpha}(1+t)^{-\beta}\md t\no \\
&\leq C \bigg(\int_{0}^{\f{\bar s}{2}}(1+t)^{-\alpha-\beta}\md t+\int_{\f{\bar s}{2}}^\infty(1+|t-\bar s|)^{-\alpha-\beta}\md t\bigg)\no \\
&\leq C \bigg(2-2\Big(1+\f{\bar s}{2}\Big)^{1-\alpha-\beta}+1\bigg)\leq C.
\end{align}
Combining \eqref{B-6} with \eqref{equ:3.35}-\eqref{equ:3.37}, we conclude that for \(\hat{f}(\xi)=0\) when \(|\xi|\notin[\f{1}{2}, 1]\),
\[\|Af\|_{L_t^qL_{|x|}^\infty L_\theta^2}
\leq C\Big(\sum_k\int_{-\infty}^{\infty}|\hat{c}_k(s)|^2\md s\Big)^{\f{1}{2}}\leq C\|f\|_{L^2(\R^2)}.\]
Thus we have proved \eqref{equ:3.20} and futher \eqref{equ:3.17}. Namely, Lemma 3.3 is proved.
\end{proof}

Next we turn to estimate the solution $w$ of problem \eqref{equ:3.3}. Note that \(w\) can be written as
\[
w(t,x)=\int_0^t\left(V_2(t, D_x)V_1(\tau, D_x)-V_1(t, D_x)V_2(\tau,
D_x)\right)F(\tau,x)\,d\tau.
\]
To estimate $w(t,x)$, it suffices to treat the term $\int_0^tV_2(t,
D_x)V_1(\tau, D_x)F(\tau,x)d\tau$ since the treatment on the term
$\int_0^tV_1(t, D_x)V_2(\tau, D_x)F(\tau,x)d\tau$ is completely
analogous.

If we repeat the reduction of (3.23)-(3.24) in \cite{HWYin1}, we then have
\[\int_0^tV_2(t,D_x)V_1(\tau, D_x)F(\tau,x)d\tau=\int_0^t\int_{\mathbb{R}^n}e^{i(x\cdot\xi+(\phi(t)-\phi(\tau))|\xi|)}
a(t,\tau,\xi)\hat{F}(\tau,\xi) \md\xi\md \tau,\]
where the amplitude function $a$ satisfies
\begin{equation}\label{equ:3.38}
\big| \partial_\xi^\beta a(t,s,\xi)\big|\leq C\big(1+\phi(t)|\xi|\big)^{-\frac{1}{6}}\big(1+\phi(s)|\xi|\big)^{-\frac{1}{6}}|\xi|^{-\frac{2}{3}-|\beta|}.
\end{equation}
By a dual argument similar to the proof of Lemma 3.4 in \cite{HWYin1}, we can prove that if \(\hat{F}(\tau,\xi)=0\) when \(|\xi|\notin[\f{1}{2}, 1]\), then
\[\Big\|\int_{\R}V_2(t,D_x)V_1(\tau, D_x)F(\tau,x)d\tau\Big\|_{L^q_tL^r_{|x|}L_\theta^2(\mathbb{R}_+\times\R^2)}\leq C
\|F\|_{L^{\tilde{q}'}_tL^{\tilde{r}'}_{|x|}L_\theta^2(\mathbb{R}_+\times\R^2)},\]
where
\begin{equation}\label{equ:3.39}
\frac{1}{q}+\frac{3}{r}=\frac{1}{\tilde{q}}+\frac{3}{\tilde{r}}
\end{equation}
and
\begin{equation}\label{equ:3.40}
\f{1}{q}\leq 1-\frac{3}{2}\cdot\frac{1}{r}, \quad \f{1}{\tilde{q}}\leq 1-\frac{3}{2}\cdot\frac{1}{\tilde{r}}.
\end{equation}
Then an application of Lemma 3.2 yields that for \(\hat{F}(\tau,\xi)=0\) when \(|\xi|\notin[\f{1}{2}, 1]\),
\begin{equation}\label{equ:3.41}
\|w\|_{L^q_tL^r_{|x|}L_\theta^2(\mathbb{R}_+\times\R^2)}\leq C
\|F\|_{L^{\tilde{q}'}_tL^{\tilde{r}'}_{|x|}L_\theta^2(\mathbb{R}_+\times\R^2)}.
\end{equation}
Utilizing Lemma~\ref{lem3.1} to remove the restriction on the support of \(\hat{F}\) in \eqref{equ:3.41}, we then get the following estimate for problem \eqref{equ:3.3}.
\begin{lemma}\label{pro:3.2}
Let \(w\) be the solution of \eqref{equ:3.3}. If \(q, r, \tilde{q}, \tilde{r}\geq2\) and satisfy \eqref{equ:3.39}-\eqref{equ:3.40}, then
\[\|w\|_{L^q_tL^r_{|x|}L_\theta^2(\mathbb{R}_+\times\R^2)}\leq C
\|F\|_{L^{\tilde{q}'}_tL^{\tilde{r}'}_{|x|}L_\theta^2(\mathbb{R}_+\times\R^2)}.\]
\end{lemma}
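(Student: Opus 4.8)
The proof assembles the ingredients developed above, and the plan is to organize the Duhamel/$TT^*$ argument as follows. First I would start from the Duhamel representation $w(t,x)=\int_0^t\bigl(V_2(t,D_x)V_1(\tau,D_x)-V_1(t,D_x)V_2(\tau,D_x)\bigr)F(\tau,x)\,d\tau$ and retain only the term $\int_0^tV_2(t,D_x)V_1(\tau,D_x)F(\tau,x)\,d\tau$, the second term being analogous. Repeating the reduction of (3.23)--(3.24) in \cite{HWYin1}, this term is an oscillatory integral operator with phase $x\cdot\xi+(\phi(t)-\phi(\tau))|\xi|$ and amplitude $a(t,\tau,\xi)$ obeying the symbol estimate \eqref{equ:3.38}. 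The structural point is the extra factor $|\xi|^{-2/3}$ in \eqref{equ:3.38}, which is produced by composing the two propagators and which converts the homogeneous-Sobolev exponent $s$ of Lemma~\ref{pro:3.1} into the matched scaling relation \eqref{equ:3.39} between the norm of $F$ and the norm of $w$.

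Next I would localize $\hat F(\tau,\cdot)$ to the annulus $|\xi|\in[1/2,1]$, expand its angular part into a Fourier series as in the proof of Lemma~\ref{pro:3.1}, and note that the action of $V_2(t,D_x)V_1(\tau,D_x)$ on each angular mode is governed by the $\theta$-kernel bounds of Lemma~\ref{lem:3.3} and the weighted $L^2_s$ estimate of the Claim, now carrying the product weight $(1+\phi(t))^{-1/6}(1+\phi(\tau))^{-1/6}$. A dual ($TT^*$-type) argument following the proof of Lemma~3.4 in \cite{HWYin1}, which combines the estimate of Lemma~\ref{pro:3.1} with its dual, then yields the non-retarded estimate
\[
\Bigl\|\int_{\R}V_2(t,D_x)V_1(\tau,D_x)F(\tau,\cdot)\,d\tau\Bigr\|_{L^q_tL^r_{|x|}L^2_\theta}\le C\,\|F\|_{L^{\tilde{q}'}_tL^{\tilde{r}'}_{|x|}L^2_\theta},
\]
valid under \eqref{equ:3.39}--\eqref{equ:3.40}, with the two Knapp-type restrictions \eqref{equ:3.40} on $(q,r)$ and $(\tilde{q},\tilde{r})$ entering exactly as \eqref{equ:3.5} does in Lemma~\ref{pro:3.1}.

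To finish, I would apply the Christ--Kiselev type Lemma~\ref{lem3.2} in the time variable to replace $\int_{\R}$ by the retarded integral $\int_0^t$, obtaining \eqref{equ:3.41} for $\hat F$ supported in $|\xi|\in[1/2,1]$; here one uses that $\tilde{q}'<q$ in the relevant range. Finally, applying the first inequality of Lemma~\ref{lem3.1} on the output side ($q,r\ge2$) and the second on the input side ($\tilde{q}',\tilde{r}'\le2$), together with the dyadic-scale-invariant block estimate \eqref{equ:3.41} and $\ell^2$ summation of the square functions, removes the frequency support restriction and gives the claimed bound.

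The main obstacle is the $TT^*$ step in the second paragraph: one must control the two degeneracy weights $(1+\phi(t))^{-1/6}$ and $(1+\phi(\tau))^{-1/6}$ simultaneously, carry out both time integrations, and verify convergence of the resulting double time integral precisely in the exponent range \eqref{equ:3.39}--\eqref{equ:3.40}, uniformly in the angular mode and in $r\ge0$. This is where the degenerate, variable-coefficient nature of $\partial_t^2-t\Delta$ genuinely complicates the constant-coefficient scheme of \cite{Smi}, and where the full strength of Lemma~\ref{lem:3.3} and the Claim is needed.
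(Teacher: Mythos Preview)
Your proposal is correct and follows essentially the same route as the paper: Duhamel representation, reduction to the oscillatory integral with amplitude \eqref{equ:3.38}, a dual/$TT^*$ argument (as in Lemma~3.4 of \cite{HWYin1}) under frequency localization to obtain the non-retarded estimate, Christ--Kiselev (Lemma~\ref{lem3.2}) to pass to $\int_0^t$, and Littlewood--Paley (Lemma~\ref{lem3.1}) to remove the dyadic restriction. Your extra remarks on the angular expansion, the role of the $|\xi|^{-2/3}$ factor, and the need for $\tilde q'<q$ in the Christ--Kiselev step are accurate elaborations of what the paper leaves implicit.
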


\section{Proof of Theorem~\ref{thm:1.2}}
First we consider the linear problem \eqref{equ:3.1}. Recall the definition of vector fields
\(\{Z\}\) in Theorem 1.2, then by Lemma~\ref{pro:3.1} and energy estimates, we have
\begin{equation}\label{equ:4.1}
\begin{split}
\sum_{|\alpha|\leq1}(\|Z^\alpha u&\|_{L^q_tL^r_{|x|}L_\theta^2(\mathbb{R}_+\times\R^2)}
+\|Z^\alpha u\|_{L^\infty_t\dot{H}^s(\mathbb{R}_+\times\R^2)}  \\
& \leq C \sum_{|\alpha|\leq1}(\|Z^\alpha f\|_{\dot{H}^s(\R^2)}
+\|Z^\alpha g\|_{\dot{H}^{s-{\f{2}{3}}}(\R^2)}
+\|Z^\alpha F\|_{L^{\tilde{q}'}_tL^{\tilde{r}'}_{|x|}L_\theta^2(\mathbb{R}_+\times\R^2)}),\\
\end{split}
\end{equation}
where \(q, r, \tilde{q}, \tilde{r}\geq2\) satisfy \eqref{equ:3.40} and
\begin{equation}\label{equ:4.2}
\frac{1}{q}+\frac{3}{r}=\frac{1}{\tilde{q}'}+\frac{3}{\tilde{r}'}-2=\f{3}{2}(1-s), \quad \frac{1}{q}+\frac{3}{2r}\leq 1, \quad \frac{1}{\t q}+\frac{3}{2\t r}\leq 1.
\end{equation}
Note that the nonlinear term in \eqref{equ:original} is \(|u|^p\), we then have \(q=p\tilde{q}'\) and \(r=p\tilde{r}'\).
This together with condition \eqref{equ:4.2} yields
\[s=1-\f{4}{3(p-1)}.\]
Meanwhile the conditions on \(r\) and \(q\) become
\begin{align}\label{equ:4.3}
\frac{1}{q}+\frac{3}{r}= & \f{2}{p-1}, \\
\frac{1}{q}+\frac{3}{2r}\leq & 1. \label{equ:4.4}
\end{align}

\vskip 0.2 true cm

{\bf Case I. Choosing \(r=p\) in \eqref{equ:4.3} and \eqref{equ:4.4}}

\vskip 0.2 true cm

In this case, it follows from \eqref{equ:4.3} that \(q=\f{p(p-1)}{3-p}\). Then
\[\frac{1}{q}+\frac{3}{2r}\leq 1\Longleftrightarrow 2p^2-3p-3\geq0\Longleftrightarrow p\geq p_{\crit}(2).\]
From Lemma~\ref{lem3.1} we require \(q\geq2\), which leads to
\[\f{p(p-1)}{3-p}\geq2\Longrightarrow p\geq2.\]
This condition is fulfilled by $p>p_{\crit}(2)$ and Remark 1.3. Furthermore, we have
\[\tilde{q}'=\f{q}{p}=\f{p-1}{3-p}.\]
On the other hand, by Lemma~\ref{lem3.1} we also require \(1\leq\tilde{q}'\leq2\) and \(1\leq\tilde{r}'\leq2\), which is equivalent to \(2\leq p\leq7/3\). In addition, one needs
\[\frac{1}{\tilde{q}'}+\frac{3}{2\tilde{r}'}=\frac{1}{\tilde{q}'}\leq 1,\]
which holds by \(\tilde{q}'\geq1\).
Collecting all these observations above, we intend to prove the global existence 
of $u$ to problem \eqref{equ:original} by an iteration argument in the range
\[p_{\crit}(2)<p \leq\f{7}{3},\]
provided the initial data are small. More specifically, let \(u_0\) solve the Cauchy problem \eqref{equ:3.2},
we then define \(u_k\) (\(k\geq1\)) by solving
\begin{equation}\label{equ:4.5}
\left\{ \enspace
\begin{aligned}
&\partial_t^2 u_k-t \Delta u_k =|u_{k-1}|^p, \quad (t,x)\in\mathbb{R}_+\times\R^2\\
&u_k(0,\cdot)=f(x), \quad \partial_{t} u_k(0,\cdot)=g(x).
\end{aligned}
\right.
\end{equation}
The first step is to show that if
\begin{equation}\label{equ:4.6}
  \sum_{|\alpha|\leq1}(\|Z^\alpha f\|_{\dot{H}^s(\R^2)}+\|Z^\alpha g\|_{\dot{H}^{s-{\f{2}{3}}}(\R^2)})<\varepsilon,
  \quad s=1-\f{4}{3(p-1)},
\end{equation}
and \(\varepsilon>0\) is small enough, then
\[M_k=\sum_{|\alpha|\leq1}\big(\|Z^\alpha u_k\|_{L^q_tL^p_{|x|}L_\theta^2(\mathbb{R}_+\times\R^2)}
     +\|Z^\alpha u_k\|_{L^\infty_t\dot{H}^s(\mathbb{R}_+\times\R^2)}\big)\]
is uniformly small, where \(q=\f{p(p-1)}{3-p}\) and \(s=1-\f{4}{3(p-1)}\).

For \(k=0\), it follows from Lemma~\ref{pro:3.1} and the energy estimate that
\[M_0\leq\varepsilon_0.\]

For \(k\geq1\), \eqref{equ:3.1} yields
\begin{align}\label{C-0}
M_k\leq C_0\varepsilon+C_0\sum_{|\alpha|\leq1}\|Z^\alpha (|u_{k-1}|^p)\|_{L^{\tilde{q}'}_tL^1_{|x|}L_\theta^2(\mathbb{R}_+\times\R^2)}, \quad \tilde{q}'=\f{p-1}{3-p}.
\end{align}
To control the right hand side of \eqref{C-0}, we note that for a function \(v(x)=v(|x|, \th)\)
$(x\in\Bbb R^2)$ with \(\sum_{|\alpha|\leq1}|Z^\alpha v|\in L^2_\theta\),
\[\sum_{|\alpha|\leq1}\big|Z^\alpha |v|^p\big|\leq C |v|^{p-1}\sum_{|\alpha|\leq1}|Z^\alpha v|.\]
Since \(\partial_\theta=x_1\partial_2-x_2\partial_1\in\{Z\}\), we have
\[\|v(|x|,\cdot)\|_{L^\infty_\theta}\leq C \sum_{|\alpha|\leq1}\|Z^\alpha v(|x|,\cdot)\|_{L^2_\theta},\]
which derives
\begin{align}\label{equ:4.7}
&\Big\|\big\||v|^{p-1}\sum_{|\alpha|\leq1}|Z^\alpha v|\big\|_{L^2_\theta}\Big\|_{L_t^{\tilde{q}'}L^1_{|x|}}\leq
C\Big\|\big\||v|^{p-1}\big\|_{L^\infty_\theta}\sum_{|\alpha|\leq1}\|Z^\alpha v\|_{L^2_\theta}\Big\|_{L_t^{\tilde{q}'} L^1_{|x|}}\no\\
&\leq C\Big\|\big\||v|\big\|^{p-1}_{L^\infty_\theta}\sum_{|\alpha|\leq1}\|Z^\alpha v\|_{L^2_\theta}\Big\|_{L_t^{\tilde{q}'}L^1_{|x|}}\no \\
&\leq C\Big\|\big(\sum_{|\alpha|\leq1}\|Z^\alpha v\|_{L^2_\theta}\big)^{p-1}\sum_{|\alpha|\leq1}\|Z^\alpha v\|_{L^2_\theta}\Big\|_{L_t^{\tilde{q}'}L^1_{|x|}}\no \\
&\leq C\Big\|\big(\sum_{|\alpha|\leq1}\|Z^\alpha v\|_{L^2_\theta}\big)^p\Big\|_{L_t^{\tilde{q}'}L^1_{|x|}}\no \\
&\leq C(\sum_{|\alpha|\leq1}\|Z^\alpha v\|_{L_t^{\tilde{q}'}L^p_{|x|}L^2_\theta})^p, \quad\text{where $q=p\tilde{q}'$}.
\end{align}
Thus we have
\[M_k\leq C_0\varepsilon+C_1M_{k-1}^p.\]
If \(M_{k-1}\leq 2C_0\varepsilon\), then for small $\ve>0$,
\begin{align}\label{C-1}
M_k\leq C_0\varepsilon+C_1M_{k-1}^{p-1}M_{k-1}\leq C_0\varepsilon+\f{1}{2}\times 2C_0\varepsilon\leq 2C_0\varepsilon.
\end{align}
Define
\[A_k=\|u_k-u_{k-1}\|_{L^q_tL^r_{|x|}L_\theta^2}.\]
Then by \eqref{C-1} and direct computation similar to \eqref{equ:4.7}, we get that for small $\ve>0$,
\[A_k\leq CA_{k-1}(M_{k-1}+M_{k-2})^{p-1}\leq CA_{k-1}(2C_0\varepsilon)^{p-1}\leq \f{1}{2}A_{k-1}.\]
This means that there exists a function \(u\in L^q_tL^r_{|x|}L_\theta^2\) such that \(u_k\to u\) in \(L^q_tL^r_{|x|}L_\theta^2\).
In addition,
\[\big||u|^p-|u_k|^p\big\|_{L_t^{\tilde{q}'}L^1_{|x|}L_\theta^2}\leq C\|u-u_k\|_{L^q_tL^r_{|x|}L_\theta^2}
\to 0,\]
which means \(|u_k|^p\to |u|^p\) in \(L^1_{\textup{loc}}(\Bbb R_+\times\Bbb R^2)\) and hence in the sense of distribution.
Therefore \(u\) is a global weak solution of \eqref{equ:original} and the proof of Theorem 1.2 is completed
for $p_{\crit}(2)<p \leq\f{7}{3}$.

\vskip 0.2 true cm

{\bf Case II. Choosing \(r=p+\f{1}{3}\)  in \eqref{equ:4.3} and \eqref{equ:4.4}}
\vskip 0.2 true cm

In this case, it follows from \eqref{equ:4.3} that \(q=\f{(3p+1)(p-11)}{11-3p}\). Then
\(q\geq2\) is equivalent to
\[3p^2+4p-23\geq0,\]
which leads to \(p\geq\f{\sqrt{73}-2}{3}\). Note that
\[\f{\sqrt{73}-2}{3}\approx2.162<2.186\approx\f{3+\sqrt{33}}{4}=p_{\crit}(2).\]
In addition,
\[\tilde{q}'=\f{(3p+1)(p-1)}{p(11-3p)}\in[1,2]\Longleftrightarrow\f{13+\sqrt{193}}{12}\leq p\leq\f{4+\sqrt{17}}{3},\]
and
\[\tilde{r}'=\f{3p+1}{3p}\in(1,2]\Longleftrightarrow p\geq\f{1}{3}.\]
On the other hand,
\[\f{1}{q}+\f{3}{2r}\leq1=\f{2}{p-1}-\f{9}{3p+1}+\f{9}{2(3p+1)}\leq1\Longleftrightarrow6p^2-7p-15\geq0,\]
which derives
\[p\geq\f{7+\sqrt{409}}{12}\approx2.269.\]
Note that the following condition in  Lemma~\ref{lem3.1} is also required
\begin{equation}\label{equ:4.8}
\f{1}{\tilde{q}'}+\f{3}{2\tilde{r}'}\leq1.
\end{equation}
Substituting
\[\f{1}{\tilde{q}}=1-\f{1}{\tilde{q}'}=1-\f{p(11-3p)}{(3p+1)(p-1)}\]
and
\[\f{1}{\tilde{r}}=1-\f{1}{\tilde{r}'}=\f{1}{3p+1}\]
into \eqref{equ:4.8} yields
\[p\leq\f{19+\sqrt{433}}{12}\approx3.317.\]
Since \(\f{13+\sqrt{193}}{12}\approx2.141\) and \(\f{4+\sqrt{17}}{3}\approx2.708\), we obtain
that the admissible range for \(p\) in Case II is
\[\f{7+\sqrt{409}}{12}\leq p\leq\f{4+\sqrt{17}}{3}.\]
Hence, we can use \eqref{equ:4.1} and repeat the computation from \eqref{equ:4.7} to \eqref{C-1} to get a global weak solution
\(u\in L^q_tL^{p+1/3}_{r}L_\theta^2(\R_+^{1+2})\) of problem \eqref{equ:original},
where \(q=\f{(3p+1)(p-11)}{11-3p}\) and $\f{7+\sqrt{409}}{12}\leq p\leq\f{4+\sqrt{17}}{3}$.

\vskip 0.2 true cm

{\bf Case III. Choosing \(r=p+1\) and \(q=\f{p^2-1}{5-p}\)  in \eqref{equ:4.3} and \eqref{equ:4.4}}

\vskip 0.2 true cm

In this case, by \eqref{equ:4.3} we get \(q=\f{p^2-1}{5-p}\). Then \(q\geq2\) is equivalent to
\[p^2+2p-11\geq0,\]
which derives
\[p\geq2\sqrt{3}-1.\]
In addition,
\[\tilde{q}'=\f{p^2-1}{p(5-p)}\in[1,2]\Longleftrightarrow\f{5+\sqrt{33}}{4}\leq p\leq\f{5+2\sqrt{7}}{3},\]
and
\[\tilde{r}'=\f{p+1}{p}\in(1,2]\Longleftrightarrow p\geq1.\]
On the other hand,
\[\f{1}{q}+\f{3}{2r}=\f{5-p}{p^2-1}+\f{3}{2(p+1)}\leq1\Longleftrightarrow 2p^2-p-9\geq0,\]
which leads to
\[p\geq\f{1+\sqrt{73}}{4}\approx2.386.\]
Note that the following condition in  Lemma~\ref{lem3.1} is also required
\[\f{1}{\tilde{q}}+\f{3}{2\tilde{r}}=1-\f{p(5-p)}{p^2-1}+\f{3}{2(p+1)}\leq1.\]
This means
\[0\leq p\leq\f{7+\sqrt{73}}{4}\approx3.886.\]
Since \(2\sqrt{3}-1\approx2.464\), \(\f{5+\sqrt{33}}{4}\approx2.686\) and \(\f{5+2\sqrt{7}}{3}\approx3.43>3\),
the admissible range for \(p\) in Case III is
\[\f{5+\sqrt{33}}{4}\leq p\leq\f{5+2\sqrt{7}}{3}.\]
Then we can use \eqref{equ:4.1} and repeat the computation from \eqref{equ:4.5} to \eqref{C-1} to get a global weak solution \(u\in L^q_tL^{p+1}_{r}L_\theta^2(\R_+^{1+2})\), where \(q=\f{p^2-1}{5-p}>2\) and \(\f{5+\sqrt{33}}{4}\leq p\leq3\).

\vskip 0.2 true cm

Note that $\f{7+\sqrt{409}}{12}<\f73<\f{4+\sqrt{17}}{3}$ and $\f{5+\sqrt{33}}{4}<\f{4+\sqrt{17}}{3}<3$. Then
\[\bigg(p_{\crit}(2),\f{7}{3}\bigg]\bigcup\bigg[\f{7+\sqrt{409}}{12},\f{4+\sqrt{17}}{3}\bigg]\bigcup\bigg[\f{5+\sqrt{33}}{4},3\bigg]
=(p_{\crit}(2), p_{\conf}(2)].\]
Therefore collecting the proofs in Case I-Case III, we obtain Theorem 1.2.

\end{document}